\theoremstyle{plain}
\newtheorem{theorem}{Theorem}
\newtheorem{lemma}{Lemma}
\newtheorem{proposition}{Proposition}[section]
\theoremstyle{proof}
\theoremstyle{definition}
\newtheorem{definition}{Definition}[section]
\theoremstyle{remark}
\newtheorem{remark}{Remark}
\theoremstyle{lamma}
\newcommand{\Z}{\mathbb{Z}}
\newcommand{\C}{\mathbb{C}}
\newcommand{\mf}{\mathfrak}
\numberwithin{equation}{section}
\numberwithin{lemma}{section}
\numberwithin{theorem}{section}
\theoremstyle{thmrm}
\begin{document} 
\title[]{Simple modules for twisted Hamiltonian extended affine Lie
algebras}
\author[]{Santanu Tantubay}
\address{Santanu Tantubay:  The Institute of Mathematical Science, A CI of Homi Bhaba National Institute,IV Cross Road, CIT Campus
Taramani
Chennai 600 113
Tamil Nadu, India.}
\email{santanutant@imsc.res.in, tantubaysantanu@gmail.com}

\author[]{ Priyanshu Chakraborty}
\address{Priyanshu Chakraborty:School of Mathematical Sciences, Ministry of Education Key Laboratory of Mathematics and Engineering Applications and Shanghai Key Laboratory of PMMP,
		East China Normal University, No. 500 Dongchuan Rd., Shanghai 200241, China.}
\email{ priyanshu@math.ecnu.edu.cn, priyanshuc437@gmail.com}
\author[]{Punita Batra}
\address{Punita Batra: Harish-Chandra Research Institute, A CI of Homi Bhaba National Institute,
Chhatnag Road, Jhunsi, Allahabad 211 019, India,}
\email{batra@hri.res.in}

\keywords{Extended affine Lie algebras, Classical Hamiltonian Lie algebras}
\subjclass [2010]{17B67, 17B66}

\maketitle
\begin{abstract} 
In this paper we consider the twisted Hamiltonian extended affine Lie algebra (THEALA). We classify the irreducible integrable modules for these Lie algebras with finite dimensional weight spaces when the finite dimensional center acts non-trivially.  This Lie algebra has a triangular decomposition, which is different from the natural triangular decomposition of twisted full toroidal Lie algebra. Any irreducible integrable module of it is a highest weight module with respect to the given triangular decomposition. In this paper we describe the highest weight space in detail.

\end{abstract}

\section{Introduction} 
Extended affine Lie algebras (EALAs) form a category of important Lie algebras consisting of finite dimensional simple Lie algebras, affine Lie algebras and class of some other Lie algebras. Twisted toroidal extended affine Lie algebras are examples of EALAs. The structure theory of EALAs have been developed by several mathematicians like Allison, Azam, Berman, Gao, Neher, Pianzola and Yoshii (see \cite{[2]}, \cite{[16]}, \cite{[17]} and references therein). In 2004, Rao classified all the irreducible integrable modules for toroidal Lie algebras with finite dimensional weight spaces. In 2005, Rao and Jiang considered the full toroidal Lie algebra and they identified the highest weight space with Jet modules of derivations of Laurent polynomial rings. Later in 2018, Rao and Batra considered the more general Lie algebra, called twisted full toroidal Lie algebra and they classified all the irreducible integrable weight modules for this Lie algebra. Full toroidal or twisted full toroidal Lie algerbas are not examples of extended affine Lie algebras. So instead of adding full derivations one adds a subalgebra of derivations space in order to make it an extended affine Lie algebra. Twisted toroidal extended affine algebra is such an example. In 2020, Rao, Batra, Sharma classified all the irreducible integrable modules with finite dimensional weight spaces for twisted toroidal extended affine Lie algebra for nonzero level with nulity $\geq 3$. In \cite{[22]}, Yuly Billig and John Talboom gave a classification result for the category of jet modules for divergence zero vector fields on torus, which plays a pivotal role in the classification result of  twisted toridal extended affine Lie algebras. John Talboom gave a similar result for Hamiltonian vector fields on torus in \cite{[11]}. In \cite{[1]}, Rao studied structure theory of Hamiltonian extended affine Lie algebra and its integrable simple weight modules. In this article we consider the twisted form of Hamiltonian extended affine Lie algebra and study its integrable  simple weight modules. In order to classify all these modules we use a result of \cite{[11]}.  
\\\\
Let $\mathfrak{g}$ be a finite dimensional simple Lie algebra over $\mathbb{C}$ and $A_n$ be the  Laurent polynomial ring in $n\geq 2$ commuting variables $t_1, \dots ,t_n$. Let $L(\mathfrak{g})=\mathfrak{g}\otimes A_n$ be a loop algebra. Let $L(\mathfrak{g})\oplus \Omega_{A_n}/dA_n$ be the universal central extension of $L(\mathfrak{g})$. This is called the toroidal Lie algebra (TLA). Let $Der(A_n)$ be the Lie algebra of all the derivations of $A_n$. Then $L(\mathfrak{g})\oplus \Omega_{A_n}/dA_n\oplus Der(A_n)$ is called the full toroidal Lie algebra (FTLA). TLA and FTLA are not EALA, as they do not admit any non-degenerate symmetric invariant bilinear form. So instead of $Der(A_n)$, one takes ${S}_n$ the subalgebra of $Der(A_n)$, consisting of divergence zero vector fields. The Lie algebra  $L(\mathfrak{g})\oplus \Omega_{A_n}/dA_n\oplus S_n$ is called a toroidal extended affine Lie algebra which admits a non-degenerate symmetric bilinear form and hence is an example of EALA. Now we take $n=2k$, a positive even integer and let $H_n$ be the classical Hamiltonian Lie subalgebra of Der($A_n$), then  a quotient algebra of $L(\mathfrak{g})\oplus \Omega_{A_n}/dA_n\oplus H_n $ becomes an extended affine Lie algebra, called as Hamiltonian extended affine algebra. \\\\
 Here we consider more general EALAs. We take $\sigma_1, \dots ,\sigma_n$ to be finite order commuting automorphisms
  of $\mathfrak{g}$ and consider the multi-loop algebra $L(\mathfrak{g},\sigma)=\oplus_{r \in \mathbb{Z}^n}\mathfrak{g}(\underline{r})\otimes t^k$. Corresponding to a diagram automorphism, Batra studied finite dimensional modules of twisted multiloop algebras in \cite{[23]}. The finite dimensional representations of multiloop algebras (both untwisted and twisted)  have been studied by Lau in \cite{[18]}. We assume that this multi-loop algebra is a Lie torus. Now we consider the universal central extension of multi-loop algebra and add $H_n$, the classical Hamiltonian Lie algebra. We quotient this resulatant Lie algebra by $K(m)$ (See definition of $K(m)$ above Proposition 2.1). We denote it by $\tau$. In this paper we will classify irreducible integrable modules with finite dimensional weight spaces for 
 $\tau$, where the zero degree central operators act non-trivially for any $n\geq 2$. We make use of some important results from \cite{[1]} and \cite{[11]} in order to classify our modules.\\\\
  The paper has been organized as follows. In Section \ref{sec 2},  we recall the definition of multiloop algebra, Lie torus and classical Hamiltonian Lie algebra and then construct the twisted Hamiltonian extended affine Lie algebra. Towards the end of Section \ref{sec 2}, we recall a Proposition about the dimension of graded component of central extension part. In Section \ref{sec 3}, we fix a Cartan subalgebra and with respect to this Cartan subalgebra we give a root space decomposition of $\tau$. We define integrable modules for this Lie algebra and give a triangular decomposition of $\tau$. Now we are taking a different  triangular decomposition of $\tau$ from the triangular decomposition of toroidal or full toroidal Lie algebra  for the level zero or non zero case. With respect to this triangular decomposition we prove that any irreducible integrable weight module is a highest weight module. Then the highest weight space becomes an irreducible module for the zeroth part of triangular decomposition of $\tau$. In fact it becomes $\Z^{n-2}$-graded. We also show that the whole central extension part of zeroth part acts trivially on the highest weight space. This was not the case for twisted full toroidal Lie algebra. In Section \ref{sec 4}, we study the highest weight space in detail. We take subalgbera $L$ of $\tau^0$ and a subspace $W$ of $M$, and we prove that $W$ is an $L$ submodule of $M$. We also prove that $M/W$ is completely reducible module for $L$ and we identify each irreducible component of $M/W$ with tensor product of irreducible module for a semisimple Lie algebra and symplectic algebra. Finally we prove our main Theorem \ref{Thm 6.6}.  
\section{Notation and Preliminaries}\label{sec 2}
Let $n=2k$ be a positive integer and consider $A_{n}=\C[t_1 ^{\pm 1},t_2^{\pm 1}\dots , t_n^{\pm 1}]$ be a Laurent polynomial
ring in $n$ commuting variables $t_1, \dots ,t_n$ . Let $\mathfrak{g}$ be a finite dimensional simple Lie algebra over $\mathbb{C}$ with a Cartan
subalgebra $\mathfrak{h}$ and $\sigma_1,\sigma_2,\dots, \sigma_n$ be commuting
automorphisms of $\mathfrak{g}$ of orders $m_1,m_2,\dots, m_n$ respectively. Also let $(|)$ be the standard Killing form on $\mf g$.  For
$m=(m_1,m_2,\dots ,m_n)\in \mathbb{Z}^n$ we define $\Gamma=m_1\mathbb{Z}\oplus \dots
\oplus m_{k-1}\mathbb{Z}\oplus m_{k+1}\mathbb{Z}\oplus \dots \oplus m_{2k-1}\mathbb{Z}$, $\Gamma_0=m_k\mathbb{Z}\oplus m_{2k}\mathbb{Z} $ and $\Lambda:=\mathbb{Z}^{n-2}/\Gamma$. Then $m^\prime=(m_1,\dots, m_{k-1},m_{k+1},\dots, m_{2k-1})\in \Gamma$. Set $\bar{\Gamma}=m_1\mathbb{Z}\oplus \dots
\oplus m_{2k}\mathbb{Z}$. For convenience we write $\Gamma\oplus \Gamma_0=$  $\bar{\Gamma}$ throughout the paper. We denote elements of $\Z^n$ by  $r=(r_1,\dots, r_n)$.
 Then we have  $\mathfrak{g}=\displaystyle{\bigoplus_{\underline{r}\in {\mathbb Z^n}/{\bar \Gamma}}}\mathfrak{g}(\underline{r}) $, where  $\mathfrak{g}(\underline{r})=\{X \in \mathfrak{g}|\sigma_i(X)=\zeta_{i}^{r_i}X,1\leq i\leq n \}$, $\zeta_i$
are $m_i$-th primitive root of unity for $i=1,\dots,n$ and $\underline r$ denote the image of $r=(r_1, \dots, r_n)$  in  $\Z^n/{\bar \Gamma}$.
\\
Define the multiloop algebra $L(\mathfrak{g},\sigma)=\displaystyle{\bigoplus_{{r}\in {\mathbb Z^n} }}\mathfrak{g}(\underline{r})\otimes t^r$ with the Lie brackets $$[x\otimes t^a, y\otimes t^b]=[x,y]\otimes t^{a+b},$$
for $x\in \mathfrak{g}(\underline{a}),y\in \mathfrak{g}(\underline b)$ and $a,b\in \mathbb{Z}^n$.
\\
Now let $\mathfrak{g}_1$ be any arbitrary
finite-dimensional simple Lie algebra over $\mathbb{C}$ with a Cartan subalgebra
$\mathfrak{h}_1$. Let $\Delta(\mathfrak{g}_1,\mathfrak{h}_1)=supp_{\mathfrak{h}_1}(\mathfrak{g}_1)$. Then
$\Delta_1^{\times}=\Delta^{\times}(\mathfrak{g}_1,\mathfrak{h}_1)=\Delta(\mathfrak{g}_1,\mathfrak{h}_1) - \{0\}$ is an
irreducible reduced finite root system with at most two root lengths. Define

\[ \Delta_{1,en}^{\times}= \begin{cases} \Delta_1^{\times}\cup 2\Delta_{1,sh}^{\times}  &
 \text{if $\Delta_1^{\times}=B_l$ types}\\
 
\Delta_1^{\times} &\text{ otherwise}.\end{cases}\]
 
\begin{definition} 
A finite dimensional $\mathfrak{g}_1$-module $V$ is said to satisfy condition $(M)$ if $V$ is
irreducible with dimension greater than 1 and weights of $V$ relative to $\mathfrak{h}_1$ are
contained in $\Delta_{1,en}^{\times}$.
\end{definition} 

\begin{definition}
A multiloop algebra $L(\mathfrak{g},\sigma)$ is called a
Lie torus ($LT$) if 
\begin{itemize}
\item[(1)] $\mathfrak{g}(\underline{0})$ is a finite dimensional simple Lie algebra.
\item[(2)] For $\underline{r}\neq 0$ and $\mathfrak{g}(\underline{r})\neq 0$, $\mathfrak{g}(\underline{r})\cong
U(\underline{r})\oplus W(\underline{r})$, where $U(\underline{r})$ is trivial as $\mathfrak{g}(\underline{0})$-module
and either $W(\underline{r})$ is zero or satisfy condition (M).
\item[(3)] The order of the group generated by $\sigma_i,1\leq i \leq n$ is equal to
the product of the orders of each $\sigma_i$, for $1\leq i\leq n$.
\end{itemize}
   
\end{definition}

Let $A_n(m)=\mathbb{C}[t_1^{\pm  m_1},\dots,t_n^{\pm  m_n} ]$ and $\Omega_{A_n(m)}$ be a vector space spanned by the symbols
$t^rK_i,1\leq i\leq n,r\in \bar \Gamma$. Also assume that $dA_n(m)$ be the subspace of $\Omega_{A_n(m)}$ spanned by
$\sum_{i=1}^{n} r_it^rK_i,$ $r\in \bar \Gamma$. Set $Z=\Omega_{A_n(m)}/dA_n(m)$, a typical element of $Z$ is given by $K(u,r)=\displaystyle{\sum _{i=1}^n}u_it^rK_i$, for $u=(u_1,\dots ,u_n)\in \mathbb{C}^n$ and $r\in \bar \Gamma$. For convenience, We denote the Lie torus $L(\mathfrak g, \sigma)$ by $LT$. It is well known that $$\overline{LT}=LT \oplus
\Omega_{A_n(m)}/dA_n(m) $$ is the universal central extension of ${LT}$ with the following
brackets:
\begin{align}\label{a2.1}
[x(p),y(q)]=[x,y](p+q)+(x|y)K(p,p+q),
\end{align}
 
   where $x(p)=x\otimes t^p$, $p,q \in \mathbb{Z}^n$. \\
  Note that for $x\in \mathfrak{g}(\underline{p}), y\in \mathfrak{g}(\underline{q})$ with $(x|y)\neq 0$ we have $p+q\in \Bar{\Gamma}$, so the Lie bracket \ref{a2.1} is well defined.
  Let  $Der(A_n(m))$ be the space of all derivations of $A_n(m)$. A basis for $Der(A_n(m))$
is given by \{$d_i,t^rd_i|1\leq i\leq n,0 \neq r \in \bar \Gamma$\}, where $d_i=t_i\frac{d}{dt_i}$. For $u=(u_1,\dots ,u_n)\in \mathbb{C}^n, r\in \bar \Gamma $, set $D(u,r)=\displaystyle{\sum _{i=1}^n}u_it^rd_i$.  $Der(A_n(m))$ forms a Lie algebra with the Lie brackets 
$$[D(u,r),D(v,s)]=D(w,r+s),$$
where $u,v\in \mathbb{C}^n, r,s \in \bar \Gamma$, $w=(u,s)v-(v,r)u$ and $(u,s),(v,r)$ denotes the standard inner product on $\C^n.$
 \\
 Now $Der(A_n(m))$ acts
on $\Omega_{A_n(m)}/dA_n(m)$ by
 $$D(u,r).K(v,s)=(u,s)K(v,r+s)+(u,v)K(r,r+s).$$
It is well known that $Der(A_n(m))$ admits an abelian extension of $\Omega_{A_n(m)}/dA_n(m)$ with the Lie brackets
\begin{align}\label{a2.2}
[D(u,r),K(v,s)]=(u,s)K(v,r+s)+(u,v)K(r,r+s),
\end{align}
\begin{align}\label{a2.3}
[D(u,r),D(v,s)]=D(w,r+s)+(u,s)(v,r)K(r,r+s),
\end{align}

where $u,v\in \mathbb{C}^n, r,s \in \bar \Gamma$ and $w=(u,s)v-(v,r)u$.\\
For $s=(s_1\dots ,s_n)\in\bar \Gamma$ we define $\bar{s}=(s_{k+1},\dots,s_{2k},-s_1,\dots, -s_k).$
Now consider a Lie subalgebra of $Der(A_n(m))$ given by 
$$H_n(m)=span\{d_i,h_r=D(\bar r,r)|1\leq i\leq n,r\in \bar \Gamma\}.$$ It is easy to see that $[h_r,h_s]=(\bar r,s)h_{r+s}$. This Lie algebra is known as Hamiltonian Lie algebra (See \cite{[1]},\cite{[11]}). Clearly $H_n(m)$ induces an action on $Z$. Let $\bar \tau=\overline {LT}\oplus H_n(m)$, this becomes a Lie algebra with the brackets (\ref{a2.1}), (\ref{a2.2}), (\ref{a2.3}) and
$$[h_r,x\otimes t^s]=(\bar r,s)x\otimes t^{r+s},$$ for all $x\in \mathfrak{g}(\underline s), r \in \bar \Gamma, s \in \mathbb Z^n$. \\
Define $K(m)=span\{K(u,r)|u\in \mathbb{C}^n, r\in \bar{\Gamma} \setminus \{0\}, (u,\bar r)=0\}$. It is easy to see that $K(m)$ is an ideal of $\bar{\tau}$. 
\begin{proposition}(\cite{[1]}, Proposition 3.1)
\begin{enumerate}
\item $Z/K(m)$ is $\bar{\Gamma}$-graded with dim$(Z/K(m))_r=1$ having basis element $K(\bar r,r),$ for $r\neq 0.$
\item dim$(Z/K(m))_0=n$ with basis $K_i, 1\leq i\leq n$.
\end{enumerate} \qed
  \end{proposition}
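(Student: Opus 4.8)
The plan is to exploit the natural $\bar\Gamma$-grading on $\Omega_{A_n(m)}$, which descends first to $Z$ and then to $Z/K(m)$, and to compute the relevant dimensions one degree at a time. Since $\Omega_{A_n(m)}$ has homogeneous component $(\Omega_{A_n(m)})_r=\mathrm{span}\{t^rK_1,\dots,t^rK_n\}$ of dimension $n$ for every $r\in\bar\Gamma$, and $dA_n(m)$ is spanned by the homogeneous elements $\sum_{i=1}^n r_it^rK_i$, I would first record that $(dA_n(m))_r$ is one-dimensional when $r\neq 0$ and is zero when $r=0$. Hence $\dim Z_r=n-1$ for $r\neq 0$ and $\dim Z_0=n$. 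Concretely, for $r\neq 0$ the assignment $u\mapsto K(u,r)$ induces an isomorphism $Z_r\cong\mathbb C^n/\mathbb C r$, since the only relation in degree $r$ is $K(r,r)=0$.

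Next I would analyze $K(m)$ in the same graded fashion. As $K(m)$ is spanned by homogeneous vectors it is a graded subspace, with $(K(m))_0=0$ and, for $r\neq 0$,
\[
(K(m))_r=\{K(u,r)\mid u\in\mathbb C^n,\ (u,\bar r)=0\}\subseteq Z_r.
\]
The hyperplane $H_r=\{u\in\mathbb C^n\mid (u,\bar r)=0\}$ is $(n-1)$-dimensional, but to find $\dim (K(m))_r$ I must compute the dimension of its image under the projection $Z_r\cong\mathbb C^n/\mathbb C r$. This is where the central observation enters: from the definition of $\bar r$ one has
\[
(r,\bar r)=\sum_{i=1}^k r_ir_{k+i}-\sum_{i=1}^k r_{k+i}r_i=0,
\]
so $r\in H_r$, i.e.\ $\mathbb C r\subseteq H_r$. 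Therefore the image of $H_r$ in $\mathbb C^n/\mathbb C r$ has dimension $(n-1)-1=n-2$, giving $\dim (K(m))_r=n-2$ for $r\neq 0$.

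Combining the two counts, $\dim (Z/K(m))_r=(n-1)-(n-2)=1$ for $r\neq 0$ and $\dim (Z/K(m))_0=\dim Z_0=n$, which is the dimension claim in both parts. For part (2) the basis $\{K_1,\dots,K_n\}$ is then immediate, since $(Z/K(m))_0=Z_0$ is spanned by the images of $t^0K_i=K_i$. For part (1) it remains only to verify that the proposed generator $K(\bar r,r)$ is nonzero in the one-dimensional space $(Z/K(m))_r$, equivalently that $K(\bar r,r)\notin (K(m))_r$. By the membership criterion this amounts to checking $(\bar r,\bar r)\neq 0$, and indeed $(\bar r,\bar r)=\sum_{i=1}^{2k}r_i^2=(r,r)$, which is strictly positive for $r\in\bar\Gamma\setminus\{0\}\subseteq\mathbb Z^n\setminus\{0\}$. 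Hence $K(\bar r,r)$ spans $(Z/K(m))_r$, completing the argument.

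The only subtle point — and the step I would flag — is the pair of identities $(r,\bar r)=0$ and $(\bar r,\bar r)=(r,r)$ coming from the symplectic-type definition of $\bar r$: the first forces the defining hyperplane of $K(m)$ to contain $r$, which is precisely what collapses the graded dimension to one, while the second guarantees that the canonical lift $K(\bar r,r)$ survives the quotient. Everything else is a routine homogeneous dimension count, so I do not anticipate any genuine obstacle beyond correctly tracking these two orthogonality relations.
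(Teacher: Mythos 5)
Your argument is correct and complete: the graded dimension count for $\Omega_{A_n(m)}$, $dA_n(m)$ and $K(m)$, together with the two identities $(r,\bar r)=0$ and $(\bar r,\bar r)=(r,r)>0$, is exactly what is needed, and you have correctly identified these as the crux. The paper itself offers no proof here --- it simply cites Proposition 3.1 of \cite{[1]} --- so your write-up supplies the standard verification that the reference delegates, and I see no gap in it.
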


Define $\tau_n =\tau=LT\oplus Z/K(m)\oplus H_n(m)$ and construct a bilinear form on $\tau$ by
\begin{center}
$(x(r)|y(s))=\delta_{r,-s}(x|y),$ $\forall x\in \mathfrak{g}(\underline r) ,y\in \mathfrak{g}(\underline s),r,s\in \mathbb{Z}^n;$\\

$(h_r|K(\bar s,s))=\delta_{r,-s}(\bar r,\bar s), $ where $r,s\in \bar \Gamma \setminus \{0\}$.\\
$(d_i,K_j)=\delta_{i,j}$ for $1\leq i,j\leq n$.
\end{center}
All other brackets of bilinear form are zero. We see that $\tau$ becomes an extended affine Lie algebra and is called a twisted Hamiltonian extended affine Lie algebra (Twisted HEALA). \\

Let $\mathfrak{h}(\underline{0})$ denote a Cartan subalgebra of $\mathfrak{g}(\underline{0})$. Then by \cite{[13]} Lemma
3.1.3, $\mathfrak{h}(\underline{0})$ is ad-diagonalizable on $\mathfrak{g}$ and $\bigtriangleup
^\times=\bigtriangleup^\times (\mathfrak{g},\mathfrak{h}(\underline{0}))$ is an irreducible finite root system in
$\mathfrak{h}(\underline{0})$ (Proposition 3.3.5,\cite{[13]}). Let $\bigtriangleup_0
:=\bigtriangleup(\mathfrak{g}(\underline{0}),\mathfrak{h}(\underline{0}))$. It is known from (\cite{[13]}, Lemma 3.1.3 and Proposition 3.3.5) that $\mathfrak h(\underline 0)$ is a ad-diagonalizable subalgebra of $\mathfrak g$ and $\Delta_{\mathfrak g}^{\times}=Supp_{\mathfrak h(\underline 0)}(\mathfrak g) \setminus \{0\}$ is an irreducible (possibly
non-reduced) finite root system in $\mathfrak h^*$. One of the main properties of Lie tori is
that $\Delta_{\mathfrak g}=\Delta_{\mathfrak g}^{\times} \cup \{0\}=\bigtriangleup_{0,en} $ (Proposition 3.2.5,\cite{[2]}). Let $\pi=\{\alpha_1,\alpha_2,...,\alpha_d\}$ be the simple roots of $\bigtriangleup_0.$ Let $Q$ be the root system of $\bigtriangleup_0$ and $Q^+=\displaystyle{\bigoplus_{i=1}^{d}}\mathbb Z_+\alpha_i$. Here $\mathbb Z_+$ denotes the set of non-negative integers.\\
 Define $\mathfrak{g}(\underline{r},\alpha):=\{x\in \mathfrak{g}(\underline{r})|[h,x]=\alpha(h)x,\forall h\in \mathfrak{h}(\underline{0})\}$. By the properties of Lie torus $\mathfrak{g}(\underline r)=\displaystyle{\bigoplus_{\alpha \in \mathfrak{h}(\underline 0)^*} }\mathfrak{g}(\underline{r},\alpha) $.
    \begin{remark}\label{r1}
$\overline {LT}$ is a Lie $\mathbb Z^n$ torus of type $\bigtriangleup_{0,en}. $ Then by \cite{20}, Definition 4.2, $\overline {LT
}$ is generated as Lie algebra by $(\overline {LT})_\alpha =\displaystyle{\bigoplus_{r\in \mathbb Z^n}}\mathfrak{g}(\underline r, \alpha)\otimes t^r $, $\alpha \in (\bigtriangleup_{0,en})^\times$.
\end{remark}

\section{Existence of Highest weight space}\label{sec 3}
In this section we will give a root space decomposition of $\tau$. Let
$ {H}=\mathfrak{h}(\underline{0}) \displaystyle{\bigoplus _{i=1}^n} \mathbb{C}K_i \bigoplus _{i=1}^{n}\mathbb{C}d_i$
be our Cartan subalgebra for the root space decomposition of $\tau$. Define $\delta
_i,w_i \in H^{*}$ by setting 
\begin{center}
$\delta_i(\mathfrak{h}(\underline{0}))=0,\delta_i(K_j)=0$ and $\delta_i(d_j)=\delta_{ij}$, and
\end{center}
\begin{center}
$w_i(\mathfrak{h}(\underline{0}))=0$,$w_i(K_j)=\delta_{ij}$ and $w_i(d_j)=0$.
\end{center}

Take $\delta_{\beta}=\sum_{i=1}^n \beta_i\delta_i$ for $\beta \in \mathbb{C}^n$. For
$r\in \mathbb{Z}^n$, we shall refer to the vector $\delta_{r+\gamma}$ as the
translate of $\delta_r$ by the vector $\delta_{\gamma}$, where $\gamma \in
\mathbb{C}^n$.
 Then  we have $\tau =\displaystyle{\bigoplus_{\beta\in
\bigtriangleup }}\tau_{\beta}$, where $\bigtriangleup \subseteq
\{\alpha+\delta_r|\alpha\in \bigtriangleup_{0,en},r\in \mathbb{Z}^n\}$,\\
\begin{center}
$\tau_{\alpha+\delta_r}=\mathfrak{g}(\underline{r},\alpha)\otimes t^r$ for $\alpha\neq 0$, \\
$\tau_{\delta_r}=\mathfrak{g}(\underline{r},0)\otimes t^r\oplus
 \mathbb{C}K(\bar r,r)\oplus\mathbb{C} h_r $ for
$0\neq r \in \bar \Gamma  $, \\
$\tau_{\delta_r}=\mathfrak{g}(\underline{r},0)\otimes t^r$ for $r\in \mathbb{Z}^n\setminus \bar \Gamma$ and $\tau_0=H$.
\end{center}
We call elements of $\bigtriangleup$ roots of $\tau$.
A root $\beta=\alpha+\delta_r$ is called a real root if $\alpha\neq 0$. Let $\bigtriangleup
^{re}$ denote the set of all real roots and
$\beta^{\vee}=\alpha^{\vee}+\frac{2}{(\alpha|\alpha)}\displaystyle{\sum_{i=1}^n }r_iK_i$ be co-root of $\beta$, where $\alpha^{\vee}$ is the co-root of $\alpha\in
\bigtriangleup_{0,en}$. For $\gamma\in \bigtriangleup^{re}$, define
$r_{\gamma}(\lambda)=\lambda-\lambda(\gamma^{\vee})\gamma$ for $\lambda\in H^*$. Let
$ {\Omega}$ be the Weyl group of $\tau$ generated by $r_{\gamma}, \forall \gamma\in
\bigtriangleup^{re}$. 
\begin{definition}
A $\tau$ -module $V$ is called integrable if 
\begin{itemize}
\item[(1)] $V=\bigoplus_{\lambda\in H^*}V_{\lambda}$, where $V_{\lambda}=\{v\in
V|h.v=\lambda(h)v,$   $ \forall\, h\in H\}$ and $dim(V_{\lambda}) < \infty$.
\item[(2)] All the real root vectors act locally nilpotently on $V,$ i.e.,
$\mathfrak{g}(\underline{r},\alpha)\otimes t^r$ acts locally nilpotently on $V$ for all $0\neq \alpha\in
\bigtriangleup_{0,en}$.
\end{itemize}
\end{definition}
We have the following Proposiotion as \cite{[5]}.
\begin{proposition}\label{prop 3.1}(\cite{[5]})
Let $V$ be an irreducible integrable module for $\tau$. Then
\begin{itemize}
\item[(1)] $P(V)=\{\gamma \in H^*|V_{\gamma}\neq 0\}$ is $\Omega$- invariant.
\item[(2)] $dim(V_{\gamma})=dim (V_{w\gamma}), \forall w\in \Omega$.
\item[(3)] If $\lambda \in P(V)$ and $\gamma\in \bigtriangleup^{re}$, then
$\lambda(\gamma^{\vee})\in \mathbb{Z}$.
\item[(4)] If $\lambda \in P(V)$ and $\gamma\in \bigtriangleup^{re}$, and
$\lambda(\gamma^{\vee})> 0$, then $\lambda-\gamma\in P(V)$.
\end{itemize}
\end{proposition}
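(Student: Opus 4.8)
The plan is to reduce every assertion to the representation theory of the $\mathfrak{sl}_2$-subalgebras attached to real roots, exactly as in the Kac--Moody setting cited in \cite{[5]}. The first step is to attach to each real root $\gamma=\alpha+\delta_r$ (so $\alpha\neq 0$) an $\mathfrak{sl}_2$-triple inside $\tau$. Choosing a nonzero root vector $e_\gamma\in\tau_\gamma=\mathfrak{g}(\underline{r},\alpha)\otimes t^r$ and a matching $f_\gamma\in\tau_{-\gamma}$, the bracket $[e_\gamma,f_\gamma]$ lands in $\tau_0=H$; using the invariant form together with the central term in (\ref{a2.1}), one checks that after rescaling one obtains $[e_\gamma,f_\gamma]=\gamma^\vee$, $[\gamma^\vee,e_\gamma]=2e_\gamma$, and $[\gamma^\vee,f_\gamma]=-2f_\gamma$, where $\gamma^\vee=\alpha^\vee+\frac{2}{(\alpha|\alpha)}\sum_{i=1}^n r_iK_i$. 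Thus $\mathfrak{sl}_2^{(\gamma)}:=\mathbb{C}e_\gamma\oplus\mathbb{C}\gamma^\vee\oplus\mathbb{C}f_\gamma\cong\mathfrak{sl}_2$, and $\gamma^\vee\in H$ acts on each weight space $V_\lambda$ by the scalar $\lambda(\gamma^\vee)$.

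Next I would establish local finiteness of $V$ as an $\mathfrak{sl}_2^{(\gamma)}$-module. By the integrability hypothesis $e_\gamma$ and $f_\gamma$ act locally nilpotently; since $\dim V_\lambda<\infty$ and $e_\gamma,f_\gamma$ shift weights by $\pm\gamma$, the $\mathfrak{sl}_2^{(\gamma)}$-submodule generated by any weight vector is finite dimensional. Hence the restriction of $V$ to $\mathfrak{sl}_2^{(\gamma)}$ is a direct sum of finite dimensional irreducibles. Part (3) is then immediate, because on any finite dimensional $\mathfrak{sl}_2$-module the eigenvalues of $\gamma^\vee$ are integers, so $\lambda(\gamma^\vee)\in\mathbb{Z}$. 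Part (4) is the standard weight-string statement: if $\lambda(\gamma^\vee)=m>0$, then a nonzero $v\in V_\lambda$ sits in a finite dimensional $\mathfrak{sl}_2$-module at an $h$-eigenvalue $m>0$, so $f_\gamma\cdot v\neq 0$; as $f_\gamma\cdot v\in V_{\lambda-\gamma}$, this yields $\lambda-\gamma\in P(V)$.

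For parts (1) and (2) I would integrate the locally nilpotent action to the group level. Since $e_\gamma$ and $f_\gamma$ are locally nilpotent on $V$, the operators $\exp(e_\gamma)$ and $\exp(f_\gamma)$ are well-defined linear automorphisms of $V$, and the element $\widetilde{r}_\gamma:=\exp(e_\gamma)\exp(-f_\gamma)\exp(e_\gamma)$ carries $V_\lambda$ isomorphically onto $V_{r_\gamma\lambda}$, by the usual $SL_2$ computation showing that $\widetilde{r}_\gamma$ conjugates $H$ by the reflection $r_\gamma$. This simultaneously shows that $P(V)$ is stable under each $r_\gamma$, hence under all of $W$ (part (1)), and that $\dim V_\lambda=\dim V_{r_\gamma\lambda}=\dim V_{w\lambda}$ for every $w\in W$ (part (2)).

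The one step demanding genuine care is the first: verifying that the real root spaces of $\tau$ truly generate honest $\mathfrak{sl}_2$-triples with the asserted coroot $\gamma^\vee$. This rests on the Lie-torus structure recorded in Remark \ref{r1} and on the precise evaluation of $[e_\gamma,f_\gamma]$ through the cocycle $K(p,p+q)$ in (\ref{a2.1}); once the normalization $[e_\gamma,f_\gamma]=\gamma^\vee$ is pinned down, everything that follows is the textbook $\mathfrak{sl}_2$-module machinery.
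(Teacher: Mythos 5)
Your proposal is correct, and it is exactly the argument behind this statement: the paper gives no proof of its own but cites \cite{[5]}, where these four assertions are obtained precisely by attaching an $\mathfrak{sl}_2$-triple $(e_\gamma,\gamma^\vee,f_\gamma)$ to each real root via the Lie-torus pairing and the cocycle in (\ref{a2.1}), decomposing $V$ into finite dimensional $\mathfrak{sl}_2$-modules using local nilpotency together with $\dim V_\lambda<\infty$, and then exponentiating to the operator $\exp(e_\gamma)\exp(-f_\gamma)\exp(e_\gamma)$ for the Weyl-group statements. The one point you flag as delicate, the normalization $[e_\gamma,f_\gamma]=\gamma^\vee$, is indeed the only structural input beyond textbook $\mathfrak{sl}_2$ theory, and it holds here by the Lie-torus axioms recorded in Remark \ref{r1}.
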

 Now consider the triangular decomposition of $\tau$ 
 \begin{center}
 $\tau_{++}=span\{X(r),h_s,K(\bar s,s):r\in \mathbb{Z}^n,s\in \bar \Gamma,X\in \mathfrak{g}(\underline r),r_k>r_{2k},s_k>s_{2k}\},$
$$\tau_{+}=span\{X_{\alpha}(r),h_s,K(\bar s,s):r\in \mathbb{Z}^n,s\in \bar \Gamma,X_\alpha\in \mathfrak{g}(\underline r,\alpha),r_k=r_{2k}>0$$ \\
$\; or\; r_k=r_{2k}=0, \alpha>0,\;s_k=s_{2k}>0\},$
 \end{center}
 $\tau^0=span\{X( r), h_s,K(\bar s,s):r\in \mathbb{Z}^n, s\in \bar \Gamma, X\in \mathfrak{g}(\underline r,0), r_k=r_{2k}=0,s_k=s_{2k}=0\}$,
  \begin{center}
 $\tau_{--}=span\{X(r),h_s,K(\bar s,s):r\in \mathbb{Z}^n,s\in \bar \Gamma,X\in \mathfrak{g}(\underline r),r_k<r_{2k},s_k<s_{2k}\},$
$$\tau_{-}=span\{X_{\alpha}(r),h_s,K(\bar s,s):r\in \mathbb{Z}^n,s\in \bar \Gamma,X_\alpha\in \mathfrak{g}(\underline r,\alpha),r_k=r_{2k}<0$$ \\
$\; or\; r_k=r_{2k}=0, \alpha<0,\;s_k=s_{2k}<0\}.$
 \end{center}
 Now we define $\tau^+=\tau_{++}\oplus \tau_+$ and $\tau^-=\tau_{--}\oplus \tau_-$, therefor $\tau=\tau^-\oplus \tau^0\oplus \tau^+$ is the triangular decomposition of $\tau$. Note that 
 $[\tau_{++},\tau_-\oplus \tau_+]\subset  \tau_{++},\; [\tau_{--},\tau_-\oplus \tau_+] \subset \tau_{--}$.\\
 \begin{remark}\label{r2}
We see that $LT_{n-2} =span\{X_\alpha(r), X(r), K(\bar{s},s): \alpha\in \bigtriangleup_{0,en}, r_k=r_{2k}=s_k=s_{2k}=0\}$ forms a Lie torus from the Lie algebra $\mathfrak{g}$ with automorphisms $\sigma_1,\dots,\sigma_{k-1},\sigma_{k+1},\dots ,\sigma_{2k-1}$.
 \end{remark}
 We shall now define automorphism of full toroidal Lie algebra  $FT=\mathfrak{g}\otimes A_n\oplus \Omega A_n/dA_n\oplus Der(A_n)$, where $\Omega A_n/dA_n$ can be defined as quotient space of $\Omega A_n=span\{t^r
 K_i:r\in \mathbb{Z}^n, 1\leq i\leq n\}$ by $d{A_n}=span\{\sum_{i=1}^nr_it^rK_i\}$. Let $GL(n,\mathbb{Z})$ be the group of invertible matrices with integer coefficients. Let $B\in GL(n,\mathbb{Z})$ which acts on $\mathbb{C}^n$, denote this action by $Bu,\; u\in \mathbb{C}^n$. Now we define an automorphism of $FT$, again denote it by $B$ by,
 $$B.X(r)=X(Br),\; B.K(u,r)=K(Bu,Br),\;B.D(u,r)=D(Fu,Br),$$
 where $F=(B^T)^{-1}$. \\
 Define $$K_B=span\{K(Bu,Br)|(u,\bar r)=0, u\in \mathbb{C}^n,r\in \bar \Gamma\}$$ $$ H_B=span\{d_i,D(F\bar{r},Br)|1\leq i\leq n,\;r\in \bar \Gamma\setminus\{0\}\}$$
 We see that $[H_B,K_B]\subseteq K_B$. Now take $\tau_B=LT_B\oplus Z/K_B\oplus H_B$, where $LT_B$ is the image of $LT$ under the automorphism $B$ of $FT$. Now under this automorphism $\tau\cong \tau_B$. In order to avoid notational complexity instead of working with $\tau_B$, we will work with $\tau$ after applying $B$ also. \\
 Let $V$ be an irreducible integrable module with finite dimensional weight spaces with respect to the Cartan subalgebra $H$, on which not all $K_i$ act trivially. Then after twisting the module by a suitable isomorphism of above type, we can assume that only $K_k$ acts non-trivially on the module and $K_i$ acts trivially for $1\leq i\leq n $ and $i\neq k$.
 \begin{proposition}\label{p3.2}
 Suppose $V$ is an irreducible integrable module for $\tau$ with finite dimensional weight spaces with respect to the Cartan subalgebra $H$. If the central element $K_k$ acts as positive integer and $K_i$ acts trivially for $1\leq i\leq n$ and $i\neq k$, then there exists a weight $\lambda$ such that 
 \begin{enumerate}
 \item $X_\alpha(r).V_\lambda=0$, where $r\in \mathbb{Z}^n,\;X_\alpha\in \mathfrak{g}(\underline r,\alpha),\;r_k>0$.
 \item $h_s.V_\lambda=K(\bar s,s).V_\lambda=0$, where $s\in \bar \Gamma,\;s_k>0$
 \end{enumerate}
  \end{proposition}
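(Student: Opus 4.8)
The plan is to produce a weight $\lambda\in P(V)$ of maximal $d_k$-eigenvalue; such a $\lambda$ automatically satisfies (1) and (2). Indeed, every operator occurring in (1) and (2) strictly increases the $d_k$-grading: the vector $X_\alpha(r)$ shifts weights by $\alpha+\delta_r$, raising the $d_k$-value by $\delta_r(d_k)=r_k>0$, while $h_s$ and $K(\bar s,s)$ shift weights by $\delta_s$, raising the $d_k$-value by $s_k>0$. Hence if $\lambda(d_k)$ is maximal on $P(V)$, the images $X_\alpha(r).V_\lambda\subseteq V_{\lambda+\alpha+\delta_r}$ and $h_s.V_\lambda,\,K(\bar s,s).V_\lambda\subseteq V_{\lambda+\delta_s}$ all land in weight spaces of strictly larger $d_k$-value, which are therefore zero. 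Since every root of $\tau$ has integral $k$-th component, the numbers $\{\lambda(d_k):\lambda\in P(V)\}$ lie in a single coset of $\mathbb Z$; thus it suffices to show this set is bounded above, after which the maximum is attained and the proof is complete.

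The core is therefore the upper bound, and this is where the positivity of $K_k$ enters. First I would record the coroot computation: since $K_i.V=0$ for $i\neq k$ and $K_k$ acts by the positive integer $m_0$, for any real root $\gamma=\alpha+\delta_r$ and weight $\lambda$ one has
\[
\lambda(\gamma^\vee)=\lambda(\alpha^\vee)+\frac{2r_k}{(\alpha|\alpha)}\,m_0 .
\]
Applying this to $\gamma=\alpha+\delta_{-m_ke_k}$ (a genuine real root since $\underline{-m_ke_k}=\underline 0$, so $\mathfrak g(\underline{-m_ke_k},\alpha)=\mathfrak g(\underline 0,\alpha)\neq 0$ for $\alpha\in\bigtriangleup_0$) gives $\lambda(\gamma^\vee)=\lambda(\alpha^\vee)-b_\alpha$ with $b_\alpha:=\tfrac{2m_km_0}{(\alpha|\alpha)}>0$. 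By Proposition \ref{prop 3.1}(4), whenever $\lambda(\alpha^\vee)>b_\alpha$ we get $\lambda-\gamma=\lambda-\alpha+\delta_{m_ke_k}\in P(V)$, a weight of strictly larger $d_k$-value. Running $\alpha$ over $\bigtriangleup_0$ (both signs) shows that any $d_k$-maximal weight must satisfy $|\lambda(\alpha^\vee)|\le b_\alpha$ for all $\alpha\in\bigtriangleup_0$, so its restriction to $\mathfrak h(\underline 0)$ is confined to a finite set. This is precisely the affine mechanism: the subalgebra $\mathfrak g(\underline 0)\otimes\mathbb C[t_k^{\pm m_k}]\oplus\mathbb CK_k\oplus\mathbb Cd_k$ is an untwisted affine Kac--Moody algebra on which $V$ is integrable of positive level $m_0$, and the estimate above says that a maximal weight is affine-dominant.

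The hard part is upgrading this into an honest upper bound on $\lambda(d_k)$ over all of $P(V)$, and I expect this to be the principal obstacle. The affine subalgebra alone does not suffice: an integrable positive-level module for it can have weights unbounded in the $\delta_{m_ke_k}$-direction (for instance an infinite direct sum of highest-weight modules shifted along $\delta_{m_ke_k}$), so one must use that $V$ is irreducible over the whole of $\tau$ together with the finite-dimensionality of the weight spaces for the full Cartan $H$. Concretely I would combine two inputs: the Heisenberg subalgebra spanned by $\mathfrak h(\underline 0)\otimes t_k^{jm_k}$ $(j\in\mathbb Z)$ and $K_k$, whose nonzero central charge $m_0$ forces Fock-type (one-sided) behaviour along the $t_k$-direction on any space with finite-dimensional weight spaces; and the finiteness of the possible $\mathfrak h(\underline 0)$-parts of extremal weights established above, which together with Remark \ref{r1} (that $\overline{LT}$ is generated by its real root spaces) and Proposition \ref{prop 3.1}(2) prevents the $d_k$-value from escaping to $+\infty$ without creating an infinite-dimensional weight space. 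This boundedness is the analogue, for the twisted Hamiltonian setting, of the positive-level estimates of \cite{[5]} and \cite{[1]}, and I would follow that strategy, the essential and irreplaceable hypothesis being $m_0>0$.
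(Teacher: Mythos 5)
Your proposal is correct and takes essentially the same route as the paper: the paper's entire proof of this proposition is a one-line citation of Theorem 5.2 of \cite{[4]} (the standard positive-level argument, transported from the zeroth to the $k$-th coordinate), which is exactly the mechanism you reduce to — boundedness of $\{\lambda(d_k):\lambda\in P(V)\}$, existence of a $d_k$-maximal weight, and the affine-dominance estimate coming from $K_k$ acting by a positive integer. Your reduction and coroot computation are correct and in fact more detailed than what the paper records; the boundedness step you leave as a sketch, deferring to \cite{[5]} and \cite{[1]}, is precisely the content the paper likewise outsources to the cited theorem.
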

\begin{proof}
We can prove this Proposition similarly as Theorem 5.2 of \cite{[4]}. Note that instead of zero-th coordinate we need to work out with $k$-th coordinate here. 
\end{proof}

Let	
\begin{equation*}
B_{n,n} = 
\begin{pmatrix}
1 & 0& \cdots & 0&\cdots  & 0 \\
0 & 1 & \cdots & 0&\cdots & 0 \\
\vdots  & \vdots  & \ddots & \vdots& \ddots & \vdots  \\
0 & 0 & \cdots & b_{k,k}&\cdots & b_{k,2k}\\
\vdots &\vdots &\ddots &\vdots& \ddots & \vdots \\
0 & 0 & \cdots & b_{2k,k}&\cdots & b_{2k,2k}
\end{pmatrix}
\end{equation*}
 and \[ \left( \begin{array}{ccc}
b_{k,k} & b_{k,2k} \\
	b_{2k,k} & b_{2k,2k}  \end{array} \right) = \left( \begin{array}{ccc}
a & 1 \\
	a-1 & 1  \end{array} \right) \] with $2a-1>0$, all diagonal entries of $B_{n,n}$ are 1, rest of all other entries are zero except $b_{k,k},b_{k,2k},b_{2k,k},b_{2k,2k}$. 
	 Now we twist the module by an isomorphism $B\in GL(n,\mathbb{Z})$. 
Note that after twisting the module by $B_{n,n}$, we have a weight space $V_\lambda$ of $V$ such that $V_{\lambda+\eta +\delta_s}=0$, where $\eta \in Q$, $s_k-s_{2k}>0$, by Proposition \ref{p3.2}.\\
Define an ordering on the $H^*$ by $\lambda\leq \mu$ in $H^*$ if and only if $\mu -\lambda=\displaystyle{\sum_{i=1}^{d} n_i\alpha_i}+n_{d+k}\delta_k+n_{d+2k}\delta_{2k}$ where $n_i \in \mathbb Z$, either $(i)$ $n_{d+k}-n_{d+2k}>0$, or $(ii)$ $n_{d+k}=n_{d+2k}>0$ or $(iii)$ $n_{d+k}=n_{d+2k}=0$ and $n_i \in \mathbb Z_+$, $1\leq i \leq d$. By an ordering of $\mathfrak{h}(\underline{ 0})^*$ we mean that the oredering "$\leq$" restricted to $\mathfrak{h}(\underline{ 0})^*$.
\begin{theorem}
Let $V$ be an irreducible integrable module for $\tau$ with finite-dimensional weight spaces with respect to the Cartan subalgebra $H$, then there exists a weight space $V_\mu$ of $V$ such that $\tau^+.V_\mu=0$.
\end{theorem}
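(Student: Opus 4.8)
The plan is to produce the desired weight $\mu$ as a maximal element of the weight set $P(V)$ with respect to the order ``$\leq$'' introduced just above the statement, and then to read off $\tau^+.V_\mu=0$ from maximality. The order is lexicographic in three layers that match the three pieces of $\tau^+$: the \emph{primary} layer measures $\mu(d_k)-\mu(d_{2k})$ and is raised exactly by $\tau_{++}$; the \emph{secondary} layer measures the common value $\mu(d_k)=\mu(d_{2k})$ once the primary layer is fixed and is raised by the part of $\tau_+$ with $r_k=r_{2k}>0$; and the \emph{tertiary} layer measures the finite height in $Q^+$ and is raised by the part of $\tau_+$ with $r_k=r_{2k}=0,\ \alpha>0$. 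Thus every root of $\tau^+$ strictly increases exactly one of these three layers, so that once $\mu$ is maximal, each $\beta\in\bigtriangleup(\tau^+)$ sends $\mu$ to a strictly larger weight and hence $V_{\mu+\beta}=0$.

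First I would establish boundedness of the primary layer, which is immediate from the preparation already carried out. After twisting by $B_{n,n}$, Proposition \ref{p3.2} gives a weight $\lambda$ with $V_{\lambda+\eta+\delta_s}=0$ for all $\eta\in Q$ and all $s$ with $s_k-s_{2k}>0$. Since $V$ is irreducible, every weight $\mu$ has the form $\lambda+\eta+\delta_s$ with $\eta\in Q$ and $s\in\mathbb Z^n$; as $\eta$ contributes nothing to the $\delta_k,\delta_{2k}$ components, the primary value of $\mu$ exceeds that of $\lambda$ precisely when $s_k-s_{2k}>0$, in which case $V_\mu=0$. Hence the primary layer attains a maximum $c$ on $P(V)$, realized on the slice $P_c=\{\mu\in P(V):\mu(d_k)-\mu(d_{2k})=c\}$.

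The heart of the argument is to bound the secondary layer on $P_c$, and this is where positivity of the level intervenes. For a long root $\alpha$ of $\bigtriangleup_0$ and $N$ in the appropriate sublattice, $\gamma=\alpha+N\delta_k+N\delta_{2k}$ is a real root lying in the secondary null direction, with coroot $\gamma^\vee=\alpha^\vee+\tfrac{2}{(\alpha|\alpha)}(NK_k+NK_{2k})$; since only $K_k$ acts, as a positive integer $\ell$, one gets $\mu(\gamma^\vee)=\langle\mu,\alpha^\vee\rangle+\tfrac{2N\ell}{(\alpha|\alpha)}$. Integrability makes each such $\gamma$ generate a locally nilpotent $\mathfrak{sl}_2$, so the $\gamma$-strings through weights are finite and governed by $\mu(\gamma^\vee)$; because $\ell>0$, letting $N\to\infty$ forces $\mu(\gamma^\vee)>0$ for large $N$, and Proposition \ref{prop 3.1}(4) together with finiteness of these strings prevents the secondary value from growing without bound on $P_c$. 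Equivalently, one may invoke the classification of integrable modules with finite-dimensional weight spaces for the affine subalgebra attached to this null direction: positive level forces the weights to be bounded above along $\delta_k+\delta_{2k}$. In parallel, on a fixed primary–secondary slice the remaining freedom is the finite root direction, where integrability restricts $V$ to a locally finite module for $\mathfrak g(\underline 0)$ (equivalently for $LT_{n-2}$ of Remark \ref{r2}), so that Proposition \ref{prop 3.1} and finiteness of $\bigtriangleup_0$ bound the tertiary height from above as well.

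With all three layers bounded above and taking discrete values, the order ``$\leq$'' attains a maximum on $P(V)$; choose $\mu$ realizing it. For any $\beta\in\bigtriangleup(\tau^+)$ the weight $\mu+\beta$ has strictly larger primary, secondary, or tertiary value, so it lies above the maximum and $V_{\mu+\beta}=0$; as $\tau^+=\bigoplus_{\beta\in\bigtriangleup(\tau^+)}\tau_\beta$, this yields $\tau^+.V_\mu=0$, as required. The main obstacle is the secondary bound of the third paragraph: unlike the primary bound it is not supplied by Proposition \ref{p3.2}, and it genuinely requires the affine/level input (integrability together with $K_k>0$) to rule out weights escaping to $+\infty$ along the null direction $\delta_k+\delta_{2k}$.
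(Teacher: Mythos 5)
Your proposal is correct and follows essentially the same route as the paper: the primary bound comes from Proposition \ref{p3.2} after the $B_{n,n}$ twist, the bound along the null direction $\delta_k+\delta_{2k}$ comes from integrability together with the positive action of $K_k$ (exactly the step the paper delegates to the method of Theorem 5.2 of \cite{[4]}), the finite-root direction is controlled by local finiteness over $\mathfrak{g}(\underline{0})$ (the paper's finite-dimensionality of $P_g(W)$), and a weight maximal for the order ``$\leq$'' then kills $\tau^+$. The only difference is presentational: you spell out the level-positivity mechanism that the paper handles by citation, while the paper organizes the same argument by first restricting to the slice $W$ viewed as a module for the subalgebra $\tilde{g}$.
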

\begin{proof}
We consider the Lie algebra $\tilde{g}=span \{X(r), K(\bar s,s),h_s,K(u,0),D(u,0):X\in \mathfrak{g}(\underline{r}), r\in \mathbb{Z}^n,\; s\in \bar{\Gamma}, r_k=r_{2k},s_k=s_{2k}, u\in \mathbb{C}^n\}$. Consider the subspace $$W=\displaystyle{ \bigoplus_{\eta \in Q,\;l\in \mathbb{Z}^n,\;l_k=l_{2k}}} V_{\lambda+
\eta+\delta_l}.$$ We can see that $W$ is an integrable (may not be irreducible) $\tilde{g}$-module with respect to the same Cartan subalgebra $H$. \\
Fix a weight $\lambda_1=\lambda +\eta+\delta_l$ of $W$. Let $g_i=\lambda_1(d_i)$ for $1\leq i \leq n$ and set $g=(g_1,g_2,..,g_n)$. Consider $P_g(W)=\{ v \in W: d_i.v=g_iv, \; 1\leq i \leq n\}$. It is easy to see that $P_g(W)$ is an integrable module for $\mathfrak{g}(\underline{ 0})$ with finite dimensional weight spaces. Now by Lemma 3.5 of (\cite{[19]}), we have $P_g(W)$ a finite dimensional module for $\mathfrak{g}(\underline{ 0})$. Hence $P_g(W)$ has only finitely many weights and consider the maximal weight $\lambda_2$ corresponding to $\lambda_1$ with respect to the ordering $"\leq "$. So $\lambda_1 \leq \lambda_2$ and $\lambda_2+\eta$ is not a weight of $W$, for all $\eta > 0$ and $\eta \in \bigtriangleup_{0,en}.$ Now we can use the method used in Theorem 5.2 of \cite{[4]} to prove that there exists a weight $\mu=\lambda +\eta'+\delta_r$ of $W$, where $\eta' \in Q, r \in \mathbb Z^n, r_k=r_{2k}$ such that $V_{\mu +\eta+\delta_s}=0$ for $\eta \in Q, s \in \mathbb Z^n, s_k=s_{2k}>0$ or $s_k=s_{2k}=0$ but $\eta \in Q^+$. Now we can prove $\tau^+.V_\mu=0$ with the help of similar method used in Theorem 5.3 of \cite{[1]}. 
\end{proof}
The following Proposition is standard.
\begin{proposition}\label{prop 3.3}
\begin{enumerate}
\item $M=\{v \in V: \tau^+.v=0 \}$ is a non-zero irreducible $\tau^0$ module.
\item The weights of $M$ lies in the set $\{ \mu + \delta_r:r_k=r_{2k}=0, r \in \mathbb Z^n \},$ for a fixed weight $\mu$ of $M$.
\end{enumerate} \qed
\end{proposition}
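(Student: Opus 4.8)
The plan is to run the standard highest-weight argument relative to the triangular decomposition $\tau=\tau^-\oplus\tau^0\oplus\tau^+$. First I would record non-triviality and $\tau^0$-stability. By the Theorem immediately above there is a weight $\mu$ with $\tau^+.V_\mu=0$, so $V_\mu\subseteq M$ and $M\neq 0$; moreover, since $H\subseteq\tau^0$ and $V$ is a weight module, $M=\bigoplus_\lambda(M\cap V_\lambda)$ is again a weight module with finite-dimensional weight spaces. To see that $M$ is $\tau^0$-stable I would first check from the grading of $\tau$ that $[\tau^0,\tau^+]\subseteq\tau^+$ (one verifies this separately on $\tau_{++}$ and on $\tau_+$, using that a $\tau^0$-bracket leaves $r_k-r_{2k}$, the common value $r_k=r_{2k}$, and the finite root $\alpha$ unchanged). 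Then for $x\in\tau^0$, $v\in M$ and $y\in\tau^+$ one has $y.(x.v)=x.(y.v)+[y,x].v=0$, because $y.v=0$ and $[y,x]\in\tau^+$; hence $x.v\in M$.

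Next I would build a grading adapted to the decomposition. For a weight $\gamma\in H^*$ set $p(\gamma)=\gamma(d_k)-\gamma(d_{2k})$ and, on the slice $p=0$, let $q(\gamma)=\gamma(d_k)$; finally let $\mathrm{ht}(\gamma)$ be the height of the finite part $\gamma|_{\mathfrak{h}(\underline 0)}$ measured by a functional that is positive on $Q^+\setminus\{0\}$. Ordering the triples $(p,q,\mathrm{ht})$ lexicographically gives a grading $D$ of $V=\bigoplus_c V^{(c)}$ for which $\tau^+$ strictly raises $D$, $\tau^-$ strictly lowers it, and $\tau^0$ preserves the $D$-class. In particular all weights of $M$ lie in a single, maximal $D$-class, namely $[\mu]$.

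For irreducibility, let $0\neq N\subseteq M$ be a $\tau^0$-submodule. Since $V$ is irreducible, $U(\tau).N=V$; by PBW, together with $\tau^+.N=0$ and $U(\tau^0).N=N$, this gives $V=U(\tau^-).N$. Because $\tau^-$ strictly lowers $D$ while the degree-zero part of $U(\tau^-)$ is just $\mathbb{C}$, the component in the top class $V^{([\mu])}$ of any element of $U(\tau^-).N$ already lies in $\mathbb{C}\cdot N=N$. As $M\subseteq V^{([\mu])}$, writing any $v\in M$ as $v=\sum u_i n_i$ and projecting onto $V^{([\mu])}$ yields $v\in N$, so $M=N$; this proves (1). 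For (2), irreducibility gives $M=U(\tau^0).M_\mu$ for any fixed weight $\mu$ of $M$; since every root of $\tau^0$ is of the form $\delta_r$ with $r_k=r_{2k}=0$ (the root vectors being the $X(r)$ with $X\in\mathfrak{g}(\underline r,0)$ and the $h_s,K(\bar s,s)$ with $s_k=s_{2k}=0$), every weight of $M$ is $\mu+\delta_r$ with $r_k=r_{2k}=0$.

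The main obstacle is the bookkeeping in the second step: the triangular decomposition is genuinely two-layered (the primary direction $r_k-r_{2k}$, and a secondary layer where $r_k=r_{2k}$ involving both the common value and the finite root $\alpha$), so I must choose the lexicographic grading $D$ so that it simultaneously separates $\tau_{++}/\tau_{--}$ and $\tau_+/\tau_-$ from $\tau^0$, and so that the degree-zero part of $U(\tau^-)$ reduces to scalars. Once $D$ is set up correctly, both the $\tau^0$-stability bracket computation and the top-class projection are routine.
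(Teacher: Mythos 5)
Your argument is correct and is exactly the standard highest-weight argument that the paper itself invokes without proof (it labels the proposition ``standard'' and gives no argument). The only compressed step is your assertion that all weights of $M$ lie in the single maximal $D$-class: this requires the irreducibility of $V$ (for any $0\neq v\in M$ one has $V=U(\tau^-)U(\tau^0)v$ by PBW, so every weight of $V$, and in particular of $M$, lies in a class $\leq$ that of $\mathrm{wt}(v)$, whence all classes of weights of $M$ coincide), but that is a one-line addition.
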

\begin{proposition}
If $\mu (h)=0,\; \forall\; h\in \mathfrak{h}(\underline{0})$ and for some weight $\mu$ of $M$, then $M$ becomes an $H_{n-2}(m^\prime)$ irreducible module.
\end{proposition}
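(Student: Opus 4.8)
The plan is to identify $H_{n-2}(m')$ with the subalgebra of $\tau^0$ spanned by $\{d_i : i\neq k,2k\}$ together with $\{h_s : s\in\Gamma\}$, and then to show that every other generator of $\tau^0$ acts on $M$ as a scalar or as $0$. Once this is done, an $H_{n-2}(m')$-submodule of $M$ is automatically stable under all of $\tau^0$, so the $\tau^0$-irreducibility of Proposition~\ref{prop 3.3} upgrades to $H_{n-2}(m')$-irreducibility. Listing the generators: the multiloop part of $\tau^0$ is $\bigoplus_{r_k=r_{2k}=0}\mathfrak g(\underline r,0)\otimes t^r$, which splits into the finite Cartan $\mathfrak h(\underline 0)$ (case $r=0$), the loop-Cartan pieces $\mathfrak h(\underline 0)\otimes t^r$ ($0\neq r\in\Gamma$), and the trivial pieces $U(\underline r)\otimes t^r=\mathfrak g(\underline r,0)\otimes t^r$ ($r_k=r_{2k}=0$, $\underline r\neq 0$); the remaining generators are $h_s$ ($s\in\Gamma$), $K(\bar s,s)$ ($s\in\Gamma\setminus\{0\}$), and the $K_i,d_i$. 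Since $\mu$ vanishes on $\mathfrak h(\underline 0)$ and every weight of $M$ is of the form $\mu+\delta_r$, all weights vanish on $\mathfrak h(\underline 0)$, so $\mathfrak h(\underline 0).M=0$; moreover $d_k,d_{2k}$ act by the constants $\mu(d_k),\mu(d_{2k})$ (as $r_k=r_{2k}=0$ on all weights of $M$) and the $K_i$ are central scalars. It thus remains to kill $U(\underline r)\otimes t^r$, $\mathfrak h(\underline 0)\otimes t^r$ and $K(\bar s,s)$.

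The crucial step is to show that for every $\alpha\in\bigtriangleup_{0,en}$ with $\alpha>0$ and every $b\in\mathbb Z^n$ with $b_k=b_{2k}=0$, the negative slice vector $X_{-\alpha}(b)$ annihilates $M$. I would argue through the $\mathfrak{sl}_2$-triple attached to the real root $\gamma=\alpha+\delta_{-b}$, with raising vector $e=X_\alpha(-b)$, lowering vector $f=X_{-\alpha}(b)$ and $h=\gamma^\vee=\alpha^\vee-\frac{2}{(\alpha|\alpha)}\sum_i b_iK_i$. For any weight $\lambda$ of $M$ one has $\lambda(\alpha^\vee)=\mu(\alpha^\vee)=0$, while $\lambda(K_i)=\mu(K_i)=0$ for $i\neq k$ and the surviving $i=k$ term carries the factor $b_k=0$; hence $\lambda(\gamma^\vee)=0$ throughout $M$. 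Since $\alpha>0$ and $(-b)_k=(-b)_{2k}=0$, the vector $e=X_\alpha(-b)$ lies in $\tau_+\subseteq\tau^+$ and so $e.M=0$. By integrability the triple acts locally finitely on $V$, and inside a finite-dimensional $\mathfrak{sl}_2$-module a vector annihilated by $e$ with $h$-eigenvalue $0$ spans a trivial summand; therefore $f.v=X_{-\alpha}(b).v=0$ for all $v\in M$.

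Granting this, the three families fall as follows. By Remark~\ref{r2} the slice $\overline{LT_{n-2}}$ is a Lie torus, hence (Remark~\ref{r1}) generated by its real root spaces, so each element of $U(\underline r)\otimes t^r$ with $\underline r\neq 0$ is a sum of brackets $[X_\alpha(a),Y_{-\alpha}(b)]$ with $\alpha>0$, $a_k=a_{2k}=b_k=b_{2k}=0$ and $a+b=r$. As $\underline r\neq 0$ the invariant form pairs $\mathfrak g(\underline a)$ and $\mathfrak g(\underline b)$ trivially, so no central term occurs and $[X_\alpha(a),Y_{-\alpha}(b)].v=X_\alpha(a)(Y_{-\alpha}(b).v)-Y_{-\alpha}(b)(X_\alpha(a).v)=0$, both terms vanishing by $e.M=0$ and by the crucial step; thus $U(\underline r)\otimes t^r$ acts as $0$. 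For a loop-Cartan element $h_\alpha\otimes t^r$ with $0\neq r\in\Gamma$, where $h_\alpha=[X_\alpha,X_{-\alpha}]\in\mathfrak h(\underline 0)$, the same bracket has $\underline a+\underline b=0$ and yields $h_\alpha\otimes t^r+(X_\alpha|X_{-\alpha})K(a,r)$, whose left side again annihilates $M$; hence $(h_\alpha\otimes t^r).v=-(X_\alpha|X_{-\alpha})K(a,r).v$, and since $K(a,r)$ is a multiple of $K(\bar r,r)$ modulo $K(m)$, the already established triviality of the central extension part of $\tau^0$ on $M$ forces $(h_\alpha\otimes t^r).M=0$. The elements $K(\bar s,s)$ constitute exactly that central part and vanish by the same result.

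At this point $U(\underline r)\otimes t^r$, $\mathfrak h(\underline 0)\otimes t^r$, $K(\bar s,s)$ and $\mathfrak h(\underline 0)$ all act as $0$, while $K_i,d_k,d_{2k}$ act as scalars, so the $\tau^0$-action on $M$ is carried, up to scalars, by $H_{n-2}(m')$ alone. Any $H_{n-2}(m')$-submodule of $M$ is therefore a $\tau^0$-submodule, and Proposition~\ref{prop 3.3} gives that $M$ is irreducible over $H_{n-2}(m')$. I expect the main obstacle to be the $\mathfrak{sl}_2$ vanishing of $X_{-\alpha}(b)$ on $M$ and, secondarily, the clean separation of the loop-Cartan action from the central action in the case $\underline r=0$, which genuinely relies on the prior result that the central extension part of $\tau^0$ acts trivially on $M$.
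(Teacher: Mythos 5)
Your overall strategy coincides with the paper's: show that every generator of $\tau^0$ outside $H_{n-2}(m')\oplus\mathbb{C}d_k\oplus\mathbb{C}d_{2k}$ kills $M$, and transfer irreducibility from Proposition \ref{prop 3.3}. Your central step --- that $X_{-\alpha}(b)$ annihilates $M$ for $\alpha>0$, $b_k=b_{2k}=0$ --- is correct and is in substance the paper's argument: the paper phrases it as ``$\mu-\alpha+\delta_b\in P(V)$ would force $r_\alpha(\mu-\alpha+\delta_b)=\mu+\alpha+\delta_b\in P(V)$'' via Proposition \ref{prop 3.1}, using $\mu(\alpha^\vee)=0$, which is exactly your $\mathfrak{sl}_2$ computation $\lambda(\gamma^\vee)=0$ in Weyl-group clothing.

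There is, however, one genuine gap: your disposal of $\mathfrak{h}(\underline{0})\otimes t^r$ and of $K(\bar s,s)$ appeals to ``the already established triviality of the central extension part of $\tau^0$ on $M$.'' No such result is available at this point. The theorem that $(Z/K(m))\cap\tau_{n-2}$ acts trivially on $M$ appears \emph{after} this proposition, and its proof (via $r_\mu=\min\{\mu(h):\mu(h)>0\}$ for $h\in\mathbb{Z}(W_0\theta^\vee)$ and Proposition 7.4 of \cite{[1]}) is carried out under the standing assumption $\bar\mu\neq 0$ --- precisely the complement of the hypothesis $\mu|_{\mathfrak{h}(\underline{0})}=0$ you are working under here. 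So that citation is both a forward reference and one whose proof does not cover your case. The repair is what the paper actually does: by Remark \ref{r2} the slice $LT_{n-2}$ (which \emph{includes} the elements $K(\bar s,s)$ with $s_k=s_{2k}=0$) is a Lie torus, and by Remark \ref{r1} it is generated as a Lie algebra by its real root spaces. Since the annihilator of a fixed $v_\mu$ is a Lie subalgebra, and every real root vector of the slice kills $v_\mu$ (positive ones by the highest-weight condition, negative ones by your $\mathfrak{sl}_2$ step), the entire subalgebra $(LT\oplus Z/K(m))\cap\tau^0$ kills $v_\mu$; irreducibility of $M$ over $\tau^0$ then propagates this to all of $M$. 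In particular there is no need to separate the action of $h_\alpha\otimes t^r$ from that of $K(a,r)$ in the bracket $[X_\alpha(a),X_{-\alpha}(b)]$ --- the sum is killed because it lies in the subalgebra generated by elements annihilating $v_\mu$, and $K(\bar s,s)$ is killed for the same reason. With that substitution your argument closes.
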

\begin{proof}
Suppose $\alpha$ be a positive root of $\bigtriangleup_{0,en}$ such that $\mathfrak{g}(\underline{ r}, -\alpha) \neq 0$. Let $Y_\alpha \otimes t^r \in \mathfrak{g}(\underline{ r}, -\alpha)\otimes t^r$ acting non-trivially on $v_\mu $ for some weight vector $v_\mu $ of $M$. Then $\mu -\alpha +\delta_r$ is a weight of $V$. Hence by Proposition \ref{prop 3.1}(3), we have $r_\alpha(\mu -\alpha +\delta_r)=\mu -\alpha +\delta_r - (\mu -\alpha +\delta_r)(\alpha^\vee) \alpha=\mu +\alpha +\delta_r$ is a weight of $V$, a contradiction. By Remark \ref{r1} and Remark \ref{r2}, $(LT\oplus Z/K(m))\cap \tau^0$ acting trivially on $v_\mu$. Consider the non-zero subspace $W=\{v \in M: (LT\oplus Z/K(m))\cap \tau^0.v=0\}$ of $M$. We see that $W$ is a $\tau^0$-module, hence by irreducibility $W=M$. Note that $d_k,d_{2k}$ are central in $\tau^0$ and hence acts as scalar on $M$. Therefore $M$ is irreducible module for $H_{n-2}(m^\prime)$.
\end{proof}
Classification of irreducible weight modules for $H_n$ is still unknown, So we can not comment on irreducible weight modules of $H_{n-2}$. Now throughout the paper we assume that $\bar \mu=\mu|_{\mathfrak{h}(\underline{0})}\neq 0$. Therefore there exists a simple root $\alpha\in \bigtriangleup_0$ such that $\mu(h_\alpha)\neq 0$.\\
 \indent  We fix some $i,\; 1\leq i\leq n,\; i\neq k,2k$ and consider the extended loop algebra $\mathfrak{g}(\underline{0})\otimes \mathbb{C}[t_i^{\pm m_i}]\oplus \mathbb{C}d_i$. Let $\theta$ be the highest root of $\mathfrak{g}(\underline{0})$, $\theta^{\vee}$ be its co-root and $\Omega_0$ be the Weyl group of $\mathfrak{g}(\underline{0})$. Also assume that $\Omega_i$ be the Weyl group of the loop algebra $\mathfrak{g}(\underline{0})\otimes \mathbb{C}[t_i^{\pm m_i}]\oplus \mathbb{C}d_i$. We can see that $h.v={\mu} (h)v$ for all $v\in M$  and $h\in \mathfrak{h}(\underline{0})$. Define $t_{i,h}:(h(\underline{ 0})\oplus \mathbb{C}d_i)^*\rightarrow (h(\underline 0)\oplus \mathbb{C}d_i)^* $ by $t_{i,h}(\lambda)=\lambda-\lambda(h)\delta_i$, where $h\in \mathbb{Z}(\Omega_0\theta^\vee) $. Note that $\mathbb{Z}(\Omega_0\theta^\vee)$ lies in $\mathbb{Z}$ linear combination of coroots for $\bigtriangleup_0$ and $\lambda(\theta^\vee)>0$ for all weight $\lambda$ of $M$. Therefore $r_{\mu}=min_{h\in \mathbb{Z}(\Omega_0\theta^\vee)}\{\mu(h):\mu(h)>0\}\in \mathbb{N}$. Let $r_{\bar \mu}=\mu(h_0)$ for some $h_0\in \mathbb{Z}(\Omega_0\theta^\vee)$. It is well known that $\Omega_i$ is semidirect product of $\Omega_0$ and the commutative group generated by $t_{i,h}$, $h\in\mathbb{Z}(\Omega_0\theta^\vee)$.  
\begin{lemma}\label{lem 3.1}
For all $s_i\in \mathbb{Z}$, there exists $w\in \Omega_i$ such that $w({\mu}+s_i\delta_i)={\mu}+\bar{s_i}\delta_i$, where $0\leq \bar{s_i}< r_{{\mu}}$. 
\end{lemma}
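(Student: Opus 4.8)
The plan is to reduce the statement to a division-algorithm argument inside the translation subgroup of $W_i$. First I would record the explicit action of the translations on the relevant weights. Since $\mathbb{Z}(W_0\theta^\vee)$ consists of integral combinations of coroots of $\bigtriangleup_0$, each $h\in\mathbb{Z}(W_0\theta^\vee)$ lies in $\mathfrak{h}(\underline{0})$, whence $\delta_i(h)=0$. Applying the given formula $t_{i,h}(\lambda)=\lambda-\lambda(h)\delta_i$ to $\lambda=\mu+s_i\delta_i$ and using $\delta_i(h)=0$ yields
\[
t_{i,h}(\mu+s_i\delta_i)=\mu+\bigl(s_i-\mu(h)\bigr)\delta_i,
\]
so every translation fixes the $\mathfrak{h}(\underline{0})^*$-component $\mu$ and merely shifts the $\delta_i$-coefficient by $-\mu(h)$. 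In particular the translations compose additively, $t_{i,h}t_{i,h'}=t_{i,h+h'}$, so I only need to control the set of achievable shifts $\{\mu(h):h\in\mathbb{Z}(W_0\theta^\vee)\}$.

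Next I would identify this set of shifts with $r_\mu\mathbb{Z}$. The map $h\mapsto\mu(h)$ is a group homomorphism from the lattice $\mathbb{Z}(W_0\theta^\vee)$ to $\mathbb{C}$; by Proposition \ref{prop 3.1}(3) its values on coroots are integers, so its image is a subgroup of $\mathbb{Z}$. Because $\theta^\vee\in\mathbb{Z}(W_0\theta^\vee)$ and $\mu(\theta^\vee)>0$ (as $\mu$ is a weight of $M$), this image is nonzero, hence of the form $r\mathbb{Z}$ for its least positive element $r$; but $r$ is exactly $r_\mu=\min_{h}\{\mu(h):\mu(h)>0\}$. Thus $\{\mu(h):h\in\mathbb{Z}(W_0\theta^\vee)\}=r_\mu\mathbb{Z}$, and by definition of $r_\mu$ there is an $h_0$ with $\mu(h_0)=r_\mu$.

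Finally I would invoke the division algorithm. Given $s_i\in\mathbb{Z}$, write $s_i=qr_\mu+\bar{s_i}$ with $q\in\mathbb{Z}$ and $0\leq\bar{s_i}<r_\mu$. Taking $w=t_{i,qh_0}$ (equivalently $t_{i,h_0}^q$), which lies in the translation subgroup of $W_i$ and hence in $W_i$, the displayed formula gives
\[
w(\mu+s_i\delta_i)=\mu+\bigl(s_i-q\,\mu(h_0)\bigr)\delta_i=\mu+\bar{s_i}\delta_i,
\]
as required. The argument is essentially formal once the translation formula is in hand; the only point deserving care is the verification that the values $\mu(h)$ form the full subgroup $r_\mu\mathbb{Z}$ of $\mathbb{Z}$ (rather than a proper subgroup or a non-integral set), which is where the integrality from Proposition \ref{prop 3.1}(3) and the positivity $\mu(\theta^\vee)>0$ are both needed.
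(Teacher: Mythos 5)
Your proposal is correct and follows essentially the same route as the paper, which simply writes $s_i=\bar{s_i}+p_ir_\mu$ and applies $t_{i,h_0}^{p_i}$; you have merely spelled out the supporting details (the translation formula fixing $\mu$, the integrality of $\mu(h)$ from Proposition \ref{prop 3.1}(3), and the identification of the value set with $r_\mu\mathbb{Z}$) that the paper leaves implicit.
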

\begin{proof}
Let $s_i=\bar{s_i}+p_ir_{\mu}$. Then $t_{i,h_0}^{p_i}.({\mu}+s_i\delta_i)={\mu}+\bar{s_i}\delta_i$. 
\end{proof}
Let $A_{n-2}(m)$ be the Laurent polynomial ring with $(n-2)$ variables $t_i^{m_i}$ for $1\leq i\leq n$, $i\neq k,2k$. 
Now we consider the Lie algebra $ \tau_{n-2}=\mathfrak{g}(\underline{0})\otimes A_{n-2}(m)\oplus Z_{n-2}/K_{n-2}(m)\oplus H_{n-2}(m)$. Let $ \Omega (n-2)$ be the Weyl group of $\tau_{n-2}$, then $W_i\subseteq \Omega(n-2)$ for $1\leq i\leq n$, $i\neq k,2k$.  
\begin{lemma}\label{lem 3.2}
Let $\delta_r=\displaystyle{ \sum_{ 1\leq i\leq n, \, i\neq k,2k}} r_i\delta_i$, where $r=(r_1,\dots,r_{k-1},r_{k+1},\dots, r_{n-1})\in \mathbb{Z}^{n-2}$. Let $r_i=c_i+s_im_i$, where $0\leq c_i<m_i$. Then there exists $w\in \Omega(n-2)$ such that $w({\mu}+\delta_r)={\mu}+\delta_c+\underset{1\leq i \leq n,\;i\neq k,2k}\sum\bar{s_i}m_i\delta_i$, where $0\leq \bar{s_i}<r_{ {\mu}}$.
\end{lemma}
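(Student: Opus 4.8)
The plan is to reduce one variable at a time using Lemma \ref{lem 3.1}, and then to package the single-variable reductions into a single element of $W$ by exploiting that the relevant translations commute and act on disjoint coordinates. First I would fix $h_0\in \mathbb{Z}(W_0\theta^\vee)$ realizing the minimum, so that $\mu(h_0)=r_{\mu}$, and record the elementary computation that drives everything: since $\delta_j(h_0)=0$ for every $j$ (because $h_0\in \mathfrak{h}(\underline{0})$ and each $\delta_j$ vanishes on $\mathfrak{h}(\underline{0})$), we get $t_{i,h_0}(\mu+\delta_r)=\mu+\delta_r-r_{\mu}\delta_i$, and the value on $h_0$ is unchanged by this operation. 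Hence $t_{i,h_0}^{N}(\mu+\delta_r)=\mu+\delta_r-Nr_{\mu}\delta_i$ for every $N$, so $t_{i,h_0}$ affects only the $\delta_i$-coefficient and leaves all other coordinates fixed.

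Next, for each $i\neq k,2k$ I would write $r_i=c_i+s_im_i$ and $s_i=\bar{s_i}+p_ir_{\mu}$ with $0\leq c_i<m_i$ and $0\leq \bar{s_i}<r_{\mu}$, and set $N_i=p_im_i$. Then $r_i-N_ir_{\mu}=c_i+(s_i-p_ir_{\mu})m_i=c_i+\bar{s_i}m_i$, so $t_{i,h_0}^{N_i}$ sends the $\delta_i$-coefficient $r_i$ to exactly $c_i+\bar{s_i}m_i$ while preserving the residue $c_i$. I would then take $w=\prod_{i\neq k,2k}t_{i,h_0}^{N_i}$, which performs all the reductions at once: since each factor alters only its own coordinate and fixes the value on $h_0$, a direct check gives $t_{i,h_0}t_{j,h_0}=t_{j,h_0}t_{i,h_0}$ for $i\neq j$, so the product is well defined independently of the order and each coordinate is reduced independently. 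Each $t_{i,h_0}$ lies in $W_i\subseteq W$ by the discussion preceding Lemma \ref{lem 3.2}, so $w\in W$, and the iterated formula yields $w(\mu+\delta_r)=\mu+\delta_c+\sum_{i\neq k,2k}\bar{s_i}m_i\delta_i$, which is the claim.

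I do not expect a genuine obstacle: the argument is essentially bookkeeping layered on top of Lemma \ref{lem 3.1}, with the commutativity of the $t_{i,h_0}$ being the only point that deserves an explicit line. The one mild subtlety I would flag is the exponent $N_i=p_im_i$ rather than $p_i$; this is forced because $t_{i,h_0}$ decreases the raw $\delta_i$-coefficient by $r_{\mu}$ per application, whereas here we must decrease it by $p_ir_{\mu}m_i$ in order to reduce the $s_i$-part to $\bar{s_i}$ while keeping the residue $c_i$ untouched.
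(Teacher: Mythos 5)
Your proof is correct and takes essentially the same route as the paper, whose proof simply cites Lemma \ref{lem 3.1}: reduce each coordinate $i\neq k,2k$ separately by a power of the translation $t_{i,h_0}\in W_i\subseteq W$, these translations commuting and each touching only its own $\delta_i$-coefficient since $\delta_j(h_0)=0$ for all $j$. Your extra care with the exponent $N_i=p_im_i$ --- so that the residue $c_i$ modulo $m_i$ is preserved and only the quotient $s_i$ is reduced modulo $r_{\mu}$, rather than reducing the raw coefficient $r_i$ modulo $r_{\mu}$ as a literal application of Lemma \ref{lem 3.1} would do --- is precisely the bookkeeping the paper leaves implicit.
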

\begin{proof}
The proof follows from Lemma \ref{lem 3.1}.
\end{proof} 
Now from Proposition \ref{prop 3.3}, we see that $M$ is $\mathbb{Z}^{(n-2)}$-graded. Let $M=\oplus_{r\in \mathbb{Z}^{(n-2)}} M_r$, where $M_r=\{v\in M: d_i.v=(\mu(d_i)+r_i)v, \; 1\leq i\leq n,\; i\neq k,2k\}$. Let $N^\prime= \{M_r:r_i=c_i+s_im_i,\; 0\leq c_i<m_i,\; 0\leq s_i\leq r_{\mu}, 1\leq i\leq n, i\neq k,2k\}$, then by Lemma \ref{lem 3.1}, we see that weight spaces of $M$ are uniformly bounded by maximal dimension of elements of $N^\prime$. Take $N=\oplus _{M_r\in N^{\prime} } M_r$, then $N$ is finite dimensional. Now define $M^\prime(r)=\oplus_{r_i\leq k_i< m_i+r_i}M_k$, then $M=\oplus_{r \in
\Gamma}M^{\prime}(r)$ is $\Gamma$-garded $\tau^0$ module.\\

Now we give existence of an irreducible integrable module for $\tau_{n-2}$ such that at least one element of $M$ is injectively sits inside the module. 
We consider the module $V_1=U(\tau_{n-2})M$ for $\tau_{n-2}$. If every proper submodule of $V_1$ intersects $M$ trivially then take the quotient of $V_1$ by sum of all proper submodules, the quotient space is such an example. Now assume that $V_1$ has a proper $\tau_{n-2}$ submodule, say $W_1$ such that $W_1\cap M\neq 0$. Take $M_1=U(\tau_{n-2})(W_1\cap M)$, if every proper submodule of $M_1$ intersects $M$ trivially then construct the quotient space as above. In the quotient space $W_1\cap M$ goes injectively. Again if $M_1$ has a proper submodule, say $W_2$ such that $W_2\cap M\neq 0$, then construct $M_2=U(\tau_{n-2})(W_2\cap M)$.\\
\begin{lemma}
The decreasing chain of $\tau_{n-2}$-submodules $M_1\supseteq M_2\supseteq M_3\supseteq\dots $ terminates after a finite step.
\end{lemma}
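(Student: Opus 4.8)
The plan is to argue by contradiction, assuming the chain $M_1\supseteq M_2\supseteq\cdots$ is infinite, and to extract the required finiteness from a finite-dimensional invariant together with the Weyl-group symmetry furnished by integrability. First I would record three structural facts. Since $W_{j+1}$ is a proper $\tau_{n-2}$-submodule of $M_j$ and $M_{j+1}=U(\tau_{n-2})(W_{j+1}\cap M)\subseteq W_{j+1}$, the inclusion $M_{j+1}\subsetneq M_j$ is strict at every step, so an infinite chain is strictly decreasing. Next, from $W_j\cap M\subseteq M_j\cap M\subseteq M_j$ we get $M_j=U(\tau_{n-2})(W_j\cap M)\subseteq U(\tau_{n-2})(M_j\cap M)\subseteq M_j$, so each $M_j$ is generated as a $\tau_{n-2}$-module by $M_j\cap M$, whose weights all lie in $\{\mu+\delta_r:r_k=r_{2k}=0\}$ by Proposition~\ref{prop 3.3}. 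Finally, each $M_j$ is a $\tau_{n-2}$-submodule of $V_1\subseteq V$, hence integrable, so Proposition~\ref{prop 3.1} applies to it.

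Next I would introduce the invariant. Let $\mathcal B$ be the finite set of weights $\mu+\delta_c+\sum_{i\neq k,2k}\bar s_i m_i\delta_i$ with $0\le c_i<m_i$ and $0\le \bar s_i<r_\mu$ appearing in Lemma~\ref{lem 3.2}; since $\bigoplus_{\beta\in\mathcal B}V_\beta$ is finite dimensional, so is $\bigoplus_{\beta\in\mathcal B}(M_j)_\beta$, and I set $d_j=\sum_{\beta\in\mathcal B}\dim (M_j)_\beta$. Because $M_{j+1}\subseteq M_j$, the sequence $(d_j)$ is non-increasing and bounded below by $0$, hence it stabilises: there is $J$ with $d_j=d_J$ for all $j\ge J$. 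As the weight spaces are nested, this forces $(M_j)_\beta=(M_J)_\beta$ for every $\beta\in\mathcal B$ and every $j\ge J$.

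The decisive step is to promote this equality on the finite set $\mathcal B$ to the equality $M_J=M_{J+1}$, contradicting strictness. Given $0\neq v\in (M_J\cap M)_{\beta_0}$ with $\beta_0=\mu+\delta_r$, Lemma~\ref{lem 3.2} provides $w\in W$ (a translation of the type $t_{i,h}$ of Lemma~\ref{lem 3.1}) with $w\beta_0\in\mathcal B$. Realising $w$ on the integrable module $M_J$ by the operator $\tilde w$ built from exponentials of real root vectors of $\tau_{n-2}$ — which act locally nilpotently and preserve every $\tau_{n-2}$-submodule — I obtain $\tilde w v\in (M_J)_{w\beta_0}=(M_{J+1})_{w\beta_0}\subseteq M_{J+1}$, whence $v=\tilde w^{-1}(\tilde w v)\in M_{J+1}$ since $M_{J+1}$ is $\tilde w^{-1}$-stable. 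Thus $M_J\cap M\subseteq M_{J+1}$, and applying $U(\tau_{n-2})$ gives $M_J=U(\tau_{n-2})(M_J\cap M)\subseteq M_{J+1}$, contradicting $M_{J+1}\subsetneq M_J$; so the chain must terminate.

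The main obstacle is exactly this last transfer. A priori the weight spaces of $V_1=U(\tau_{n-2})M$ at weights of $M$ can be strictly larger than those of $M$ (for instance $\mathfrak h(\underline 0)$-weight-zero products $e_\alpha f_\alpha$ return to a weight of $M$ while leaving $M$), so stabilisation of finitely many multiplicities need not control $M\cap M_j$ directly. The resolution is to work with the full module $M_J$ rather than its intersection with $M$: one moves a generator of $M_J$ into the fundamental box $\mathcal B$ by a Weyl translation, invokes the stabilised equality there, and transports back, after which generation of $M_J$ by $M_J\cap M$ closes the argument. The remaining verifications — that the translations $t_{i,h}$ lie in $W$ and are implemented by exponentials of real root vectors on any integrable $\tau_{n-2}$-module, and that these exponentials stabilise the submodules $M_j$ — are routine consequences of integrability and Proposition~\ref{prop 3.1}.
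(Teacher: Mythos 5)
Your argument is correct and follows essentially the same route as the paper: the paper reduces the claim, via Lemma \ref{lem 3.2}, to the fact that each $M_i$ is generated over $U(\tau_{n-2})$ by a subspace $N_i$ of the finite-dimensional space $N$, and then cites Proposition 5.2.6 of \cite{sp} for the termination of the chain. Your proof simply reconstructs the content of that cited step in full --- stabilisation of the nested finite-dimensional weight spaces over the fundamental box, followed by the Weyl-translation transfer (implemented by exponentials of locally nilpotent real root vectors, which preserve submodules) back to the whole submodules --- so there is no gap and no genuinely different idea.
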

\begin{proof}
Let $M_1\supsetneq M_2$ be two submodules of $\tau_{n-2}$. Then by Lemma \ref{lem 3.2}, we can find $N_i\subseteq N$ such that $M_i=U(\tau_{n-2})N_i$. Now this Proposition follows from Proposition 5.2.6 of \cite{sp}. 
\end{proof}
Let $M_i$ be such a minimal module. Then every proper submodule of $M_i$ intersect $M$ trivially. Let $\widetilde{M_i}$ be the quotient module of $M_i$ by the sum of all proper submodules. So $M_i\cap M$ sits injectively inside $\widetilde{M_i}$. Now $\widetilde{M_i}$ is an irreducible integrable module on which every $K_i$ acts trivially for $i\neq k,2k$.
 Now using Proposition (7.4) of  \cite{[1]}, we can find a vector $v\in \widetilde{M_i}$ such that $(Z/K(m))\cap \tau_{n-2}$ acts trivially on $v$. Now being an ideal of $\tau_{n-2}$ along with the irreducibility of $\widetilde{M_i}$, $(Z/K(m))\cap \tau_{n-2}$ acts trivially on $\widetilde{M_i}$. 
 \begin{theorem}
 $(Z/K(m))\cap \tau_{n-2}$ acts trivially on $M$.
 \end{theorem}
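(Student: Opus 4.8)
The plan is to transfer the triviality statement we just obtained on the auxiliary irreducible quotient $\widetilde{M_i}$ back to the full highest weight space $M$, using that $M$ is an irreducible $\tau^0$-module (Proposition \ref{prop 3.3}) together with the fact that the central subspace in question is an ideal of $\tau^0$. In broad outline: I would first check that the central part annihilates the nonzero subspace $M_i\cap M$ of $M$, then show that the set of vectors of $M$ killed by the central part is a $\tau^0$-submodule, and finally invoke irreducibility.

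First I would record that $(Z/K(m))\cap\tau_{n-2}$ annihilates $M_i\cap M$. Every $c\in (Z/K(m))\cap\tau_{n-2}$ is either some $K_i$ with $i\neq k,2k$ (which already act trivially on $V$) or of the form $K(\bar s,s)$ with $s_k=s_{2k}=0$; in all cases $c$ lies simultaneously in $\tau^0$ and in $\tau_{n-2}$. Hence for $v\in M_i\cap M$ we have $c.v\in M$ (since $M$ is a $\tau^0$-module) and $c.v\in M_i$ (since $M_i$ is a $\tau_{n-2}$-module), so $c.v\in M_i\cap M$. Because the quotient map $M_i\to\widetilde{M_i}$ is $\tau_{n-2}$-equivariant and restricts to an injection on $M_i\cap M$, and because $(Z/K(m))\cap\tau_{n-2}$ acts trivially on $\widetilde{M_i}$, the image of $c.v$ equals $c.(\overline{v})=0$; injectivity then forces $c.v=0$.

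Next I would set $M'=\{v\in M:\ ((Z/K(m))\cap\tau_{n-2}).v=0\}$ and prove it is a $\tau^0$-submodule. The essential point is that $(Z/K(m))\cap\tau_{n-2}$ is an ideal of $\tau^0$: the elements $K(\bar s,s)$ are central in $\overline{LT}$, so they bracket to zero with the weight-zero loop elements $X(r)\in\tau^0$ and with one another, while for a Hamiltonian generator $h_p$ of $\tau^0$ (with $p_k=p_{2k}=0$) formula (\ref{a2.2}) gives $[h_p,K(\bar s,s)]=(\bar p,s)K(\bar s,p+s)+(\bar p,\bar s)K(p,p+s)$, and likewise $[d_i,K(\bar s,s)]=s_iK(\bar s,s)$. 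All these terms are supported in degrees $r$ with $r_k=r_{2k}=0$, so by Proposition 2.1 they reduce modulo $K(m)$ to elements of $(Z/K(m))\cap\tau_{n-2}$. Granting the ideal property, for $x\in\tau^0$, $v\in M'$ and $c$ central one computes $c.(x.v)=x.(c.v)+[c,x].v=[c,x].v=0$, since $c.v=0$ and $[c,x]\in(Z/K(m))\cap\tau_{n-2}$ again annihilates $v$. Thus $x.v\in M'$, so $M'$ is a $\tau^0$-submodule (and it is automatically graded, as the central generators shift the $\mathbb{Z}^{n-2}$-grading by fixed amounts).

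Finally, $M'$ is nonzero since it contains $M_i\cap M$, so irreducibility of $M$ as a $\tau^0$-module (Proposition \ref{prop 3.3}) gives $M'=M$; that is, $(Z/K(m))\cap\tau_{n-2}$ acts trivially on all of $M$. The only genuinely computational step, and the main thing to verify carefully, is the ideal property in the previous paragraph — concretely, that the Hamiltonian and degree operators in $\tau^0$ carry the relevant central elements back into the $(n-2)$-variable sublattice part of $Z/K(m)$; once this is in hand the rest of the argument is formal.
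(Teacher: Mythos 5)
Your proposal is correct and takes essentially the same route as the paper: the paper's proof is just the one-line remark that the theorem ``follows from previous discussion and irreducibility of $M$ over $\tau^0$''. Your argument --- transferring triviality from $\widetilde{M_i}$ to $M_i\cap M$ via the injection $M_i\cap M\hookrightarrow\widetilde{M_i}$, then using that $(Z/K(m))\cap\tau_{n-2}$ is an ideal of $\tau^0$ to see that the annihilated vectors form a nonzero $\tau^0$-submodule of the irreducible module $M$ --- is exactly the intended argument, with the computational details (the bracket formula (\ref{a2.2}) and the grading check) correctly filled in.
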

\begin{proof}
The proof follows from previous discussion and irreducibility of $M$ over $\tau^0$.
\end{proof}

Let $h_\alpha$ be as defined above, i.e. $\mu(h_\alpha)\neq 0$. Then we have the following Proposition. 
\begin{proposition}\label{prop 3.5}
\itemize 
\item[(1)] $h_{\alpha}\otimes t^k$ acts injectively on $M$ for every $k \in \Gamma$.
\item[(2)] $h_{\alpha}\otimes t^r.h_{\alpha}\otimes
t^s=\lambda_{r,s}h_{\alpha}\otimes t^{r+s}$ on $M$, where $\lambda_{r,s}=\lambda$
for all $r\neq 0,s\neq 0,r+s\neq 0$, $\lambda_{r,-r}=\mu$ for all $r\neq 0$ and
$\lambda_{0,r}=\bar{\lambda}(h_{\alpha})$ for all $r\in \Gamma$. Further we have
$\mu \lambda_{0,r}=\lambda^2\neq 0$.
\item[(3)] dim $ (M^{\prime}(r))=$ dim $ (M^{\prime}(s))$ for all $r,s \in
\Gamma$. 
\end{proposition}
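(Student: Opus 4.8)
The plan is to reduce the entire statement to the representation theory of a single loop $\mathfrak{sl}_2$. Fix the simple root $\alpha \in \bigtriangleup_0$ with $\mu(h_\alpha) \neq 0$, and for $r \in \Gamma$ write $e(r) = X_\alpha \otimes t^r$, $f(r) = X_{-\alpha} \otimes t^r$ and $h_\alpha(r) = h_\alpha \otimes t^r$, with $X_{\pm\alpha}$ normalised so that $[e(r),f(s)] = h_\alpha(r+s) + (\text{central})$, $[h_\alpha(r),e(s)] = 2e(r+s)$ and $[h_\alpha(r),f(s)] = -2f(r+s)$. Since $r,s \in \Gamma$ forces $(r+s)_k = (r+s)_{2k} = 0$, the central term $K(r,r+s)$ occurring in (\ref{a2.1}) lies in $(Z/K(m)) \cap \tau_{n-2}$, which acts trivially on $M$ by the Theorem proved above; hence on $M$ these elements span a copy of $\mathfrak{sl}_2 \otimes \mathbb{C}[\Gamma]$ of vanishing central charge. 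Two facts are recorded first. As every weight of $M$ has the form $\mu + \delta_p$ with $\delta_p(h_\alpha)=0$, the operator $h_\alpha(0)=h_\alpha$ acts on all of $M$ by the single scalar $c := \mu(h_\alpha) = \bar{\lambda}(h_\alpha)$; by Proposition \ref{prop 3.1}(3) and the $\mathfrak{sl}_2$-integrability of $(X_\alpha,h_\alpha,X_{-\alpha})$ one gets $c \in \mathbb{Z}_{>0}$. Moreover $e(r)M = 0$ for every $r \in \Gamma$, since $\alpha > 0$ and $r_k = r_{2k} = 0$ place $X_\alpha \otimes t^r$ in $\tau_+ \subseteq \tau^+$; thus every nonzero $v \in M$ is a highest weight vector of weight $c$ for each triple $(e(r),h_\alpha(0),f(-r))$.

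For part (1) I would prove injectivity of $h_\alpha(r)$ on $M$ by adapting the corresponding argument of \cite{[1]}. Using $e(r)M = 0$ one has $h_\alpha(r)v = e(0)f(r)v = e(r)f(0)v$ for $v \in M$, so the action of $h_\alpha(r)$ is governed entirely by the loop-$\mathfrak{sl}_2$. Restricting $V$ to the affine $\mathfrak{sl}_2$ obtained by adjoining a degree derivation $d'$ (a rescaling of some $d_i$ with $i \neq k,2k$ and $r_i \neq 0$), the space $M$ is annihilated by the whole of $e \otimes \mathbb{C}[\Gamma]$ while $h_\alpha(0)$ acts by the nonzero scalar $c$; the structure of integrable level-zero modules with finite-dimensional weight spaces then forces $h_\alpha(r)$ to act injectively. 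It is exactly here that integrability is indispensable, since the abstract loop-algebra relations alone are insufficient (the naive commutator manipulations turn out tautological).

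Part (2) is the heart of the proposition. The $h_\alpha(r)$, $r \in \Gamma$, commute, so $M$ is a module over the Laurent algebra they generate, and the claim is that this action collapses to rank one. I would establish this by coupling the integrability of $V$ with the finiteness of the weight multiplicities: the finite $\mathfrak{sl}_2$-strings through each $v$ truncate the currents built from the $h_\alpha(nr)$, and this truncation yields the polynomial relations forcing $h_\alpha(r)h_\alpha(s) = \lambda\, h_\alpha(r+s)$ for a single constant $\lambda$ independent of $r,s$ whenever $r,s,r+s \neq 0$, exactly as in the untwisted computation of \cite{[1]}. The boundary cases follow at once: $h_\alpha(0)h_\alpha(s) = c\,h_\alpha(s)$ gives $\lambda_{0,s} = \bar{\lambda}(h_\alpha)$, and $h_\alpha(r)h_\alpha(-r)$ evaluates to a scalar operator $\mu\,h_\alpha(0) = \mu c\cdot \mathrm{Id}$. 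Finally $\lambda \neq 0$: were $\lambda = 0$, the composite $h_\alpha(r)h_\alpha(s)$ would vanish for $r,s,r+s \neq 0$, contradicting part (1) since both factors are injective and $M \neq 0$; matching $h_\alpha(r)h_\alpha(-r)$ against the generic rule then yields $\mu c = \mu\,\lambda_{0,r} = \lambda^2 \neq 0$.

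Part (3) is then formal. The element $h_\alpha(r)$ raises the $\Gamma$-degree by $r$, and because each coordinate of $r \in \Gamma$ is a multiple of $m_i$, it carries the fundamental block $M'(0) = \bigoplus_{0 \leq p_i < m_i} M_p$ into $M'(r)$; by part (1) this map is injective, and by part (2) the composite $h_\alpha(-r)h_\alpha(r) = \lambda^2\,\mathrm{Id}$ is a nonzero scalar, so $h_\alpha(r)\colon M'(0) \to M'(r)$ is a bijection. Hence $\dim M'(0) = \dim M'(r)$ for every $r \in \Gamma$, and translating gives $\dim M'(r) = \dim M'(s)$ for all $r,s \in \Gamma$. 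The single genuinely hard step is the collapse relation of part (2); the remainder is bookkeeping with the triangular decomposition or a direct consequence of it.
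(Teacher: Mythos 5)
Your proposal is correct and takes essentially the same route as the paper, whose entire proof of Proposition \ref{prop 3.5} is the citation of Theorem 9.1 of \cite{[1]}: your reduction to a level-zero loop $\mathfrak{sl}_2$ over $\mathbb{C}[\Gamma]$ (using that $(Z/K(m))\cap\tau_{n-2}$ kills $M$ and that $X_\alpha\otimes t^r\in\tau^+$ annihilates $M$) followed by an appeal to the computation of \cite{[1]} for the injectivity and the rank-one collapse is exactly that argument made explicit. Your deductions of $\mu\lambda_{0,r}=\lambda^2\neq 0$ and of part (3) from parts (1) and (2) are also correct.
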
 
\begin{proof}
Follows from Theorem 9.1 of \cite{[1]}.
\end{proof}

\section{classification of integrable simple modules}\label{sec 4}

Now recall that our Lie algebra reduces to $\tau= LT\oplus H_n(m)$ with
$\tau^0=\displaystyle{\bigoplus_{\substack {r\in \mathbb{Z}^n\\ r_k=r_{2k}=0}}} \mathfrak{g}(\underline{r},0)\otimes t^r\oplus H_{n-2}(m')\oplus \mathbb{C}d_k\oplus \mathbb{C}d_{2k}$. Note that $d_k,d_{2k}$ are central in $\tau^0$, hence they act by scalars on $M$.  Take
$\mathfrak{g}^{\prime}=\{x\in \mathfrak{g} \;|\;[h,x]=0,\;\sigma_k(x)=\sigma_{2k}(x)=x,\: \forall\:h \in \mathfrak{h}(\underline{0})\}$. Now since
$\mathfrak{g}^{\prime}$ is invariant under $\sigma_i$'s (where $i\neq k,2k$ ), therefore $\mathfrak{g}^{\prime}$ is $\Lambda$
graded. It is easy to see that $L(\mathfrak{g}^{\prime},\sigma)=LT\cap \tau^0$ ($=LT^0$, say). Let us take
$H_{n-2}^{\prime}(m')=$ span$\{D(\bar{r},r)-D(\bar{r},0)| \;r \in \Gamma\}$.
We can easily check that $H_{n-2}^\prime (m')$ is a Lie subalgebra of $H_{n-2}(m')$.
Furthermore let us set $L=H_{n-2}^\prime(m') \ltimes L(\mathfrak{g}^\prime,\sigma')$ and $W=$
span$\{h_\alpha \otimes t^r.v-v|r\in \Gamma,v \in M\}$, where $\sigma'=(\sigma_1, \dots, \sigma_{k-1}, \sigma_{k+1}, \dots, \sigma_{2k-1})$. We can see that $W$ is an
$L$-module.
\begin{lemma}
\itemize
\item[(1)] $W$ is a proper $L$-submodule of $M$.
\item[(2)] $\widetilde{V}=M/W$ is a finite dimensional $L$-module.

\end{lemma}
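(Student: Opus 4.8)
The plan is to realize the operators $T_r:=h_\alpha\otimes t^r$, $r\in\Gamma$, supplied by Proposition \ref{prop 3.5}, as a system of shift operators that collapses the $\Gamma$-grading $M=\bigoplus_{r\in\Gamma}M^{\prime}(r)$. First note that each $T_r$ genuinely lies in $L$: since $h_\alpha\in\mathfrak{h}(\underline 0)$ is fixed by every $\sigma_i$ and $r\in\Gamma$ forces $m_i\mid r_i$, we have $h_\alpha\otimes t^r\in L(\mathfrak{g}^{\prime},\sigma)\subseteq L$. By Proposition \ref{prop 3.5}(1) each $T_r$ is injective, and because $T_rT_{-r}$ is a nonzero scalar (Proposition \ref{prop 3.5}(2)) it is in fact invertible on $M$; moreover $T_r$ raises the $\Gamma$-degree by $r$, so $T_r\colon M^{\prime}(s)\to M^{\prime}(s+r)$ is an isomorphism, consistently with $\dim M^{\prime}(s)=\dim M^{\prime}(s+r)$ (Proposition \ref{prop 3.5}(3)). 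After the normalization recorded in Proposition \ref{prop 3.5} (which sets the structure constants $\lambda_{r,s}$ equal to $1$, equivalently makes $h_\alpha=T_0$ act as the identity on $M$), the family $\{T_r\}_{r\in\Gamma}$ satisfies $T_rT_s=T_{r+s}$ and $T_0=\mathrm{id}$, i.e.\ it realizes an action of the group $\Gamma$ on $M$ by degree shifts.

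For part (2), let $v\in M^{\prime}(s)$ be homogeneous. Taking $r=-s$ in the defining spanning set of $W$ shows directly that $T_{-s}v-v\in W$, so $[v]=[T_{-s}v]$ in $M/W$; since $T_{-s}v\in M^{\prime}(0)$, every homogeneous class, and hence all of $M/W$, lies in the image of $M^{\prime}(0)$. As $M^{\prime}(0)$ is a finite sum of weight spaces, each finite-dimensional by integrability, it is finite-dimensional, whence $\dim(M/W)\le\dim M^{\prime}(0)<\infty$. This proves $\widetilde V=M/W$ is finite-dimensional, and notably this step uses only the bare definition of $W$ together with the shift property, not the precise scalars.

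For part (1) it remains to show $W\neq M$, i.e.\ $M/W\neq 0$. I would exhibit a linear retraction $\Phi\colon M\to M^{\prime}(0)$ annihilating $W$, defined blockwise by $\Phi(v)=T_{-s}v$ for $v\in M^{\prime}(s)$. Then $\Phi|_{M^{\prime}(0)}=T_0=\mathrm{id}$, and for $v\in M^{\prime}(s)$, $r\in\Gamma$ the group law gives $\Phi(T_rv)=T_{-(s+r)}T_rv=T_{-s}v=\Phi(v)$, so $\Phi(T_rv-v)=0$ and therefore $\Phi(W)=0$. Hence $\Phi$ descends to $\bar\Phi\colon M/W\to M^{\prime}(0)$ with $\bar\Phi\circ\iota=\mathrm{id}_{M^{\prime}(0)}$, where $\iota$ is induced by the inclusion $M^{\prime}(0)\hookrightarrow M$; thus $\iota$ is injective and $M/W\supseteq\iota(M^{\prime}(0))\neq 0$ because $M^{\prime}(0)\neq 0$. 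Combined with the previous paragraph, $\bar\Phi$ is in fact an isomorphism $M/W\cong M^{\prime}(0)$.

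The main obstacle is the \emph{exactness} of the relation $T_rT_s=T_{r+s}$ (and $T_0=\mathrm{id}$): the conclusion genuinely fails if one only has $T_rT_s=\lambda T_{r+s}$ with $\lambda\neq 1$, for then $\Phi$ no longer kills $W$, and indeed applying $T_{-r}$ to $T_rw\equiv w$ yields $(\lambda^2-1)[w]=0$ for $w\in M^{\prime}(0)$, forcing $M/W=0$; similarly, if $h_\alpha$ acted by a scalar $c\neq 1$ the $r=0$ terms $(c-1)v\in W$ would already give $W=M$. Hence the heart of the argument is the scalar normalization of Proposition \ref{prop 3.5} — the identities $\lambda_{r,s}=\lambda$ and $\mu\lambda_{0,r}=\lambda^2$ together with $\lambda=1$ and $h_\alpha$ acting as the identity — which is exactly what makes the shift operators close up into an honest $\Gamma$-action and thereby guarantees that $W$ is proper.
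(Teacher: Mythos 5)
Your argument is correct and is, at bottom, the argument the paper relies on: the paper's own proof normalizes $\lambda_{r,s}=1$, rewrites $W$ as $\mathrm{span}\{z_i v-v\}$ for the finitely many shift operators $z_i=h_\alpha\otimes t^{m_i}$, and then cites Proposition 5.4(3) of \cite{[19]} for the existence of a proper submodule containing $W$, whereas you unpack that citation into an explicit retraction $\Phi\colon M\to M^{\prime}(0)$, which yields properness and the bound $\dim (M/W)\le\dim M^{\prime}(0)<\infty$ in one stroke and even identifies $M/W\cong M^{\prime}(0)$. What your version buys is self-containedness; what the paper's version buys is brevity and consistency with the parallel arguments in \cite{[19]} and \cite{[4]}. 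The one point you should tighten is the treatment of $r=0$. Rescaling $h_\alpha$ once fixes $\lambda_{r,s}=1$ for $r,s,r+s\neq 0$, and then Proposition \ref{prop 3.5} forces $T_rT_{-r}=\lambda_{r,-r}\lambda_{0,r}\,\mathrm{id}=\lambda^2\,\mathrm{id}=\mathrm{id}$; but it does not in general also make $T_0=h_\alpha$ act as the identity, since that scalar is $\lambda_{0,r}/\lambda=\mu(h_\alpha)/\lambda$, which is $1$ only if $\mu(h_\alpha)=\lambda$. The correct reading --- consistent with the paper's reduction of $W$ to the generators $z_iv-v$ with $m_i\neq 0$ --- is that the spanning set of $W$ involves only $r\neq 0$; with that convention your operators $S_r=T_r$ for $r\neq 0$ and $S_0=\mathrm{id}$ do satisfy $S_rS_s=S_{r+s}$ exactly, and your retraction $\Phi$ goes through verbatim. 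So the issue you flag in your final paragraph is real, but it is resolved by excluding $r=0$ from the generating set rather than by forcing $h_\alpha$ to act as the identity.
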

\begin{proof}
 Let $z_i=h_{\alpha}\otimes t^m_i$
for each $i=1,\dots,n$ and $i\neq k,2k$. Without loss of generality we can assume that
$\lambda_{r,s}=1$ for $r\neq 0,s\neq 0,r+s\neq 0$. Therefore we can say that $W=$
span$\{z_i.v-v|v\in M\}$. Now same as Proposition 5.4(3) of \cite{[19]}, we can find a proper $LT^0$-submodule of $M$, which contains $W$.
\end{proof}
Let $\beta_i=\mu(d_i)$ for $1\leq i\leq n$ and $i\neq k,2k$. Then $\beta=(\beta_1\dots,\beta_n)\in\mathbb{C}^{n-2}$. Then for any $L$ module
$V^\prime$, we can give a $\tau^0$ module structure on $L(V^\prime)=V^\prime\otimes A_{n-2}$ by 
\begin{center}
$x\otimes t^k.(v_1\otimes t^s)=((x\otimes t^k).v_1)\otimes t^{k+s}$.
\end{center}
\begin{center}
$D(\bar{r},r).(v_1\otimes t^s)=((D(\bar{r},r)-D(\bar{r},0)).v_1)\otimes t^{r+s}+
(\bar{r},s+\beta)(v_1\otimes t^{r+s})$
\end{center}
for all $v_1 \in V^\prime ,x \in \mathfrak{g}(\underline{k},0)$ and $D(\bar{r},r)\in H_{n-2}(m')$. \\
For $v\in M$, let $\bar{v}$ be the image of $v$ in $\widetilde{V}$. Now define 
\begin{center}
$\phi: M \rightarrow L(\widetilde{V})$
\end{center}
\begin{center}
by $v\mapsto \bar{v}\otimes t^k$ for $v\in M(k).$
\end{center}
This map is clearly a nonzero $\tau^0$ -module homomorphism. Hence by irreducibility
of $M$ it follows that $M\cong \phi (M)$ is a $\tau ^0$ submodule of
$L(\widetilde {V})$. \\Clearly $L$ is naturally $\Lambda$ graded. Now since $M$
and $W$ are $Z^{n-2}$ graded, therefore they are naturally $\Lambda$ graded and hence so is
$\widetilde{V}$. Therefore $\widetilde{V}=\oplus_{\bar{p}\in
\Lambda}\widetilde{V}(\bar{p})$.\\
Now for $\bar{p} \in \Lambda$, we set 
\begin{center}
$L(\widetilde{V})(\bar{p})=\{v\otimes t^{k+r+p}|v\in \widetilde{V}(\bar{k}),r \in
\Gamma,k\in \mathbb{Z}^{n-2}\}$
\end{center}
It can be easily verified that $L(\widetilde{V})(\bar{p})$ is a $\tau^0$ submodule
of $L(\widetilde{V})$.\\
Let $I(\bar r, r)=D(\bar r,r)-D(\bar r, 0)$ for all $r \in \Gamma$. It is easy to observe that $H_{n-2}^\prime(m')$ is a subalgebra of $H_{n-2}(m')$.
The following result can be deduced similarly as in \cite{[4]}.  
\begin{proposition}\label{prop 6.1}
\itemize
\item[(1)] $M\cong L(\widetilde{V})(\bar{0})$ as $\tau^0$ -module.
\item[(2)] $\widetilde{V}$ is $\Lambda$ -graded-irreducible module over $L$.
\item[(3)] $\widetilde{V}$ is completely reducible module over $L$ and all its
irreducible components are mutually isomorphic as $H_{n-2}^{\prime}(m')\ltimes
\mathfrak{h}(\underline{0})\otimes A_{n-2}(m')$ -modules.
\end{proposition}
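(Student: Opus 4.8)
The plan is to handle the three assertions in order, leveraging the injective map $\phi\colon M\to L(\widetilde V)$ already constructed together with Propositions \ref{prop 3.3} and \ref{prop 3.5}. For (1), I would first note that $\phi$ is a nonzero $\tau^0$-module homomorphism, so its kernel is a proper submodule of the irreducible module $M$; hence $\phi$ is injective and $M\cong\phi(M)$. It then remains to identify $\phi(M)$ with the ``diagonal'' piece $L(\widetilde V)(\bar 0)$. The inclusion $\phi(M)\subseteq L(\widetilde V)(\bar 0)$ is immediate, since for $v$ of degree $k$ one has $\bar v\in\widetilde V(\bar k)$ while the exponent $k$ also has $\Lambda$-class $\bar k$. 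For the reverse inclusion it suffices to show each projection of the homogeneous component of degree $j$ onto $\widetilde V(\bar j)$ is surjective; this is exactly where Proposition \ref{prop 3.5} enters, because the operators $h_\alpha\otimes t^r$ with $r\in\Gamma$ act invertibly on $M$ and act as nonzero scalars modulo $W$, so all homogeneous components in a fixed $\Lambda$-class have the same image in $\widetilde V$. Given $v\otimes t^{k+r}$ with $v\in\widetilde V(\bar k)$ and $r\in\Gamma$, a preimage of $v$ in degree $k+r$ then maps to it under $\phi$, proving $\phi(M)=L(\widetilde V)(\bar 0)$.

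For (2), I would set up the correspondence between $\Lambda$-graded $L$-submodules of $\widetilde V$ and $\tau^0$-submodules of $L(\widetilde V)(\bar 0)$. Given a $\Lambda$-graded $L$-submodule $U\subseteq\widetilde V$, the diagonal $L(U)(\bar 0)$ is checked directly to be $\tau^0$-stable from the action formulas: the elements $x\otimes t^{k'}$ with $k'_k=k'_{2k}=0$ are precisely $L(\mathfrak g',\sigma)$, and the $D(\bar r,r)$-action is governed by $H'_{n-2}(m')$ together with a grading shift, both of which preserve $U$. A nonzero proper graded $U$ would thus produce a nonzero proper $\tau^0$-submodule of $L(\widetilde V)(\bar 0)\cong M$, contradicting the irreducibility of $M$ from Proposition \ref{prop 3.3}. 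Hence $\widetilde V$ is $\Lambda$-graded-irreducible over $L$.

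For (3), since $\Lambda=\mathbb Z^{n-2}/\Gamma$ is a finite abelian group, $\widetilde V$ is finite dimensional by the preceding Lemma, and $\widetilde V$ is $\Lambda$-graded-irreducible over the $\Lambda$-graded algebra $L$, I would deduce complete reducibility from the graded analogue of Clifford's theorem: choosing an irreducible $L$-submodule $S\subseteq\widetilde V$, the sum of all its graded translates is a nonzero graded submodule, hence all of $\widetilde V$, exhibiting $\widetilde V$ as a sum of irreducible $L$-modules. To see that these components are mutually isomorphic as modules over $H'_{n-2}(m')\ltimes\mathfrak h(\bar 0)\otimes A_{n-2}(m')$, I would use the connecting operators $h_\alpha\otimes t^r$ with $r\in\Gamma$, which lie in $\mathfrak h(\bar 0)\otimes A_{n-2}(m')$, act invertibly, and intertwine the action of this subalgebra on the distinct graded pieces, thereby furnishing the desired isomorphisms.

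The hard part will be (3). Complete reducibility is essentially formal once finite dimensionality and graded-simplicity are in place, but verifying that the intertwining operators respect precisely the subalgebra $H'_{n-2}(m')\ltimes\mathfrak h(\bar 0)\otimes A_{n-2}(m')$ — and that it is exactly the ``semisimple directions'' coming from $L(\mathfrak g',\sigma)$ that the isomorphisms need not preserve — requires careful bookkeeping of the grading shifts. This is the step that most closely parallels the argument in \cite{[4]}, and I would model the tracking of exponents and scalars on that reference.
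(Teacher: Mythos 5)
Your treatment of (1) and (2) is sound and is essentially the intended argument (the paper itself only defers to \cite{[4]} here): injectivity of $\phi$ from irreducibility of $M$, surjectivity onto $L(\widetilde V)(\bar 0)$ via Proposition \ref{prop 3.5} because $h_\alpha\otimes t^{r-s}$ carries $M_{k+s}$ into $M_{k+r}$ without changing the class modulo $W$, and the correspondence $U\mapsto L(U)(\bar 0)$ between $\Lambda$-graded $L$-submodules of $\widetilde V$ and $\tau^0$-submodules of $L(\widetilde V)(\bar 0)\cong M$. The complete-reducibility half of (3) is also fine once "graded translates" is made precise: for $\chi\in\widehat{\Lambda}$ the map $T_\chi(v_{\bar p})=\chi(\bar p)v_{\bar p}$ is a $\phi_\chi$-semilinear automorphism of $\widetilde V$, where $\phi_\chi$ is the automorphism of $L$ scaling the degree-$\bar p$ component by $\chi(\bar p)$; then $\sum_{\chi}T_\chi(S)$ is a nonzero graded submodule which is a sum of irreducibles, hence equals $\widetilde V$ by (2).

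The gap is in the last step of (3). The operators $h_\alpha\otimes t^r$ with $r\in\Gamma$ cannot serve as intertwiners between distinct irreducible components: by the very definition of $W=\mathrm{span}\{h_\alpha\otimes t^r.v-v\}$, each $h_\alpha\otimes t^r$ with $r\in\Gamma$ acts as the \emph{identity} on $\widetilde V=M/W$, so it maps every component to itself and furnishes no isomorphism between different ones (it is moreover of $\Lambda$-degree $\bar 0$, so it would not connect distinct graded pieces either). The correct intertwiners are the operators $T_\chi$ you already introduced for complete reducibility: since $\phi_\chi$ restricts to the identity on the degree-$\bar 0$ subalgebra $H_{n-2}^{\prime}(m')\ltimes\mathfrak h(\underline 0)\otimes A_{n-2}(m')$, each $T_\chi\colon S\to T_\chi(S)$ is a genuine module isomorphism over that subalgebra, and every irreducible component of $\widetilde V$ is $L$-isomorphic to some $T_\chi(S)$ because $\widetilde V=\sum_\chi T_\chi(S)$. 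This is exactly the Clifford-theoretic step carried out in \cite{[4]}; with this substitution your proof closes.
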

Now we will concentrate on the irreducible representation of $L$. Let $(W,\pi)$ be a
finite-dimensional representation of $L$. Let $\pi (L(\mathfrak{g}^{\prime},\sigma))=\mathfrak{g}^1$, then
$\pi(L)=\mathfrak{g}^1\oplus \mathfrak{g}^2$, where $\mathfrak{g}^2$ is the unique complement of $\mathfrak{g}^1$ in $\mathfrak{gl}(W)$
(Proposition 19.1(b) of \cite{[8]} ). So $W$ will be an irreducible module for
$\mathfrak{g}^1\oplus \mathfrak{g}^2$. Therefore $W\cong W_1\otimes W_2$, where $W_1$ and $W_2$ are
irreducible modules for $\mathfrak{g}^1$ and $\mathfrak{g}^2$ respectively (see \cite{[9]} ). Let
$\mathfrak{g}^{\prime}=\mathfrak{g}^{\prime}_{ss}\oplus R$, where $\mathfrak{g}^{\prime}$ and $R$ are Levi and
radical part of $\mathfrak{g}^\prime$. Then as $\sigma_i(\mathfrak{g}^\prime)=\mathfrak{g}^\prime$ and
$\sigma_i(R)=R$ for $1\leq i\leq n$, we have
$L(\mathfrak{g}^\prime,\sigma)=L(\mathfrak{g}^\prime_{ss},\sigma)\oplus L(R,\sigma)$. Now $W_1$ is
irreducible module for $L(\mathfrak{g}^\prime,\sigma)$. As $R$ is a solvable ideal, it follows
that $\pi(L(R,\sigma))$ lies in the center of $\pi(L)$, which is at most one
dimensional. Hence $L(R,\sigma)$ acts as a scalar on $W$. So $W_1$ will be an
irreducible module for $L(\mathfrak{g}^\prime_{ss},\sigma)$. \\

Fix a positive integer $l$. For each $i$, let $a_i=(a_{i,1},\dots ,a_{i,l})$ such that
\begin{align}\label{a6.1}
a_{i,j}^{m_i}\neq a_{i,t}^{m_i}, \, for \, j\neq t.
\end{align}
Now we recall a theorem from \cite{[10]}. Let $\mathfrak{G}$ be a finite-dimensional
semisimple Lie algebra. Let $\sigma_1, \dots \sigma_n$ be finite order automorphisms
on $g$ of order $m_1,\dots m_n$ respectively. Let $L(\mathfrak{G},\sigma)$ be the corresponding
multiloop algebra. Let $I=\{(i_1,i_2,\dots, i_n)|1\leq i_j\leq l\}$. Now for
$S=(i_1,i_2,\dots ,i_n)\in I$ and $r=(r_1,r_2,\dots r_n)\in \mathbb{Z}^n$,
$a_S^r=a_{1,i_1}^{r_1}a_{2,i_2}^{r_2}\cdots a_{n,i_n}^{r_n}.$ Now consider the
evaluation map $\phi : \mathfrak{G}\otimes A\rightarrow \bigoplus \mathfrak{G}$ ($l^n$ copies), $\phi
(X\otimes t^r)=(a_{I_1}^rX,a_{I_2}^rX, \dots,a_{I_{l^n}}^rX)$, where $I_1,I_2, \cdots
I_{l^n}$ are the elements of $I$. Now consider the restriction of $\phi$ to
$L(\mathfrak{G},\sigma)$.
\begin{theorem}$($\cite{[10]}$)$\label{Th 6.1}
Let $W^\prime$ be a finite-dimensional irreducible representation of $L(\mathfrak{G},\sigma)$.
Then the representation factors through $\bigoplus \mathfrak{G}$ $($ $l^n$ copies$)$.
\end{theorem}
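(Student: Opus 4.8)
The plan is to translate this representation statement into one about cofinite ideals of $L(\mathfrak{g},\sigma)$ and then to reduce, by commutative algebra, to evaluation at finitely many points of the base torus. Write $R=\mathbb{C}[t_1^{\pm m_1},\dots,t_n^{\pm m_n}]$; multiplication by the $t_i^{m_i}$ makes $L(\mathfrak{g},\sigma)$ a Lie algebra over $R$ which is free of rank $\dim\mathfrak{g}$ as an $R$-module, and for every maximal ideal $\mathfrak{m}$ of $R$ one has $L(\mathfrak{g},\sigma)\otimes_R R/\mathfrak{m}\cong\mathfrak{g}$, realized concretely as restriction to $L(\mathfrak{g},\sigma)$ of an evaluation $t_i\mapsto c_i$ at a point $c\in(\mathbb{C}^\ast)^n$ with $c_i^{m_i}$ the coordinates of the base point cut out by $\mathfrak{m}$ (each graded piece $\mathfrak{g}(\underline r)\otimes t^r$ maps isomorphically by scaling, and $\bigoplus_r\mathfrak{g}(\underline r)=\mathfrak{g}$). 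Let $\pi\colon L(\mathfrak{g},\sigma)\to\mathfrak{gl}(W^\prime)$ be the given representation; since $W^\prime$ is finite dimensional, $K:=\ker\pi$ is an ideal of finite codimension.

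First I would establish the technical heart: because $\mathfrak{g}$ is semisimple, so that $\mathfrak{g}=[\mathfrak{g},\mathfrak{g}]$, the cofinite ideal $K$ contains $J\cdot L(\mathfrak{g},\sigma)$ for some cofinite ideal $J$ of $R$. The mechanism is to use the $\mathbb{Z}^n$-grading together with the bracket $[x(p),y(q)]=[x,y](p+q)$: perfectness lets one rewrite homogeneous elements as brackets, and a degree comparison shows that the $R$-action descends to a finite-dimensional quotient $R/J$ of $R$ with $J\cdot L(\mathfrak{g},\sigma)\subseteq K$. This is the step I expect to be the main obstacle, because in the twisted setting $L(\mathfrak{g},\sigma)\neq\mathfrak{g}\otimes R$, so the argument must be carried out grading piece by grading piece, using that the components $\mathfrak{g}(\underline r)$ together exhaust $\mathfrak{g}$ rather than working with a single copy of $\mathfrak{g}$; this is precisely where the structure theory invoked in \cite{[10]} does the work.

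Granting this, $R/J$ is a finite-dimensional, hence Artinian, commutative algebra, so by the Chinese Remainder Theorem $R/J\cong\prod_{j=1}^{p}R/\mathfrak{m}_j^{k_j}$ for finitely many maximal ideals $\mathfrak{m}_j$. Correspondingly $\pi$ factors through $L(\mathfrak{g},\sigma)/(J\cdot L(\mathfrak{g},\sigma))\cong\bigoplus_{j=1}^{p}L(\mathfrak{g},\sigma)\otimes_R R/\mathfrak{m}_j^{k_j}$. In each summand the image of $\mathfrak{g}\otimes(\mathfrak{m}_j/\mathfrak{m}_j^{k_j})$ is a nilpotent ideal lying in the derived subalgebra of $\pi(L(\mathfrak{g},\sigma))$, again by $\mathfrak{g}=[\mathfrak{g},\mathfrak{g}]$, so by Lie's theorem it acts trivially on the irreducible module $W^\prime$. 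Hence $\pi$ factors all the way through $\bigoplus_{j=1}^{p}L(\mathfrak{g},\sigma)\otimes_R R/\mathfrak{m}_j\cong\bigoplus_{j=1}^{p}\mathfrak{g}$, i.e. through evaluation at the $p$ base points cut out by the $\mathfrak{m}_j$.

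Finally I would match these points with the prescribed evaluation datum. Each $\mathfrak{m}_j$ corresponds to a point whose $i$-th coordinate (in the variable $t_i^{m_i}$) is some scalar $b_i^{(j)}\in\mathbb{C}^\ast$. Choosing $l$ large and the scalars $a_{i,s}$ so that every $b_i^{(j)}$ occurs as some $a_{i,s}^{m_i}$, the hypothesis $(\ref{a6.1})$ that $a_{i,s}^{m_i}\neq a_{i,t}^{m_i}$ for $s\neq t$ guarantees that the corresponding maximal ideals are pairwise distinct, so that $\phi$ restricted to $L(\mathfrak{g},\sigma)$ realizes the surjection $L(\mathfrak{g},\sigma)\to\prod_j L(\mathfrak{g},\sigma)\otimes_R R/\mathfrak{m}_j$ as a coordinate projection of the map into $\bigoplus\mathfrak{g}$ with $l^n$ copies. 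Therefore $\pi$ factors through $\phi$, which is the assertion. The routine verifications — that $\phi$ restricts to a Lie homomorphism on $L(\mathfrak{g},\sigma)$ and that evaluation at a single point recovers a copy of $\mathfrak{g}$ — I would treat as standard.
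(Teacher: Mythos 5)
The paper offers no proof of this statement at all: it is quoted verbatim from \cite{[10]}, so there is no in-paper argument to compare yours against. Judged on its own terms, your outline reproduces the standard strategy of that reference (and of Lau \cite{[18]}): show the kernel of $\pi$ contains $J\cdot L(\mathfrak{g},\sigma)$ for a cofinite ideal $J$ of $R=\mathbb{C}[t_1^{\pm m_1},\dots,t_n^{\pm m_n}]$, split $R/J$ by the Chinese Remainder Theorem into local Artinian factors, kill the nilpotent ideals on an irreducible module via Lie's theorem and perfectness, and identify the residual quotients with copies of $\mathfrak{g}$ through evaluation, finally choosing $l$ and the $a_{i,j}$ so that the finitely many support points sit in the evaluation grid compatibly with condition (\ref{a6.1}). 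The architecture is right, and the endgame bookkeeping (freeness of $L(\mathfrak{g},\sigma)$ of rank $\dim\mathfrak{g}$ over $R$, surjectivity of evaluation because $\sum_{\underline{r}}\mathfrak{g}(\underline{r})=\mathfrak{g}$, padding the grid) is handled correctly.

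The genuine gap is the step you yourself flag as the ``technical heart'': that the cofinite ideal $K=\ker\pi$ contains $J\cdot L(\mathfrak{g},\sigma)$ with $J\subseteq R$ cofinite. You describe a ``mechanism'' (perfectness plus a degree comparison) but never carry it out, and in the twisted setting this is exactly where the content lies: one must prove that $J=\{f\in R: f\cdot L(\mathfrak{g},\sigma)\subseteq K\}$ is an ideal of $R$ of finite codimension, either by untwisting (base-change to $A_n$ turns $L(\mathfrak{g},\sigma)$ into $\mathfrak{g}\otimes A_n$ and one descends cofiniteness of ideals) or by a direct argument on the graded pieces $\mathfrak{g}(\underline{r})$. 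Deferring precisely this point to ``the structure theory invoked in \cite{[10]}'' makes the proposal an outline of the known proof rather than a proof. A smaller imprecision: in the CRT step you write the nilpotent ideal as $\mathfrak{g}\otimes(\mathfrak{m}_j/\mathfrak{m}_j^{k_j})$, which presumes the untwisted form; the correct object is $\mathfrak{m}_jL(\mathfrak{g},\sigma)/\mathfrak{m}_j^{k_j}L(\mathfrak{g},\sigma)$, and the identity $[L,\mathfrak{m}L]+\mathfrak{m}^2L=\mathfrak{m}L$ needed for Lie's theorem to force it to act by zero uses perfectness of the multiloop algebra as an $R$-Lie algebra --- true for semisimple $\mathfrak{g}$, but it should be stated rather than absorbed into ``$\mathfrak{g}=[\mathfrak{g},\mathfrak{g}]$''.
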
 
In our case $W_1$ is irreducible module for $L(\mathfrak{g}^\prime _{ss},\sigma)$, so the representation will factors through $l^{n-2}$ copies of $\mathfrak{g}^{\prime}_{ss}$.
\begin{proposition}
Let $W_1$ be irreducible module for $L(\mathfrak{g}^\prime _{ss},\sigma)$ as above. Then the
representation of $L(\mathfrak{g}^\prime _{ss},\sigma)$ factors through only one copy of
$\bigoplus \mathfrak{g}^\prime _{ss}$. So $\mathfrak{g}^1_{ss}\cong \mathfrak{g}^\prime_{ss}$.
\end{proposition}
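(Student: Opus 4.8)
The plan is to begin from the evaluation realization supplied by Theorem \ref{Th 6.1}. Since $W_1$ is a finite dimensional irreducible module for the multiloop algebra $L(\mathfrak{g}'_{ss},\sigma)$ in the $n-2$ surviving variables, that result lets me write $W_1\cong W^{(1)}\otimes\cdots\otimes W^{(p)}$, an evaluation module supported on finitely many points $b_1,\dots,b_p\in(\mathbb C^{\ast})^{\,n-2}$ that are pairwise distinct modulo the joint action of the $\sigma_i$ ($i\neq k,2k$), each $W^{(j)}$ being a nontrivial irreducible $\mathfrak{g}'_{ss}$-module. My goal is to force $p=1$, for then $\pi(L(\mathfrak{g}'_{ss},\sigma))=\mathfrak{g}^1_{ss}$ is the image of a single evaluation and $\mathfrak{g}^1_{ss}\cong\mathfrak{g}'_{ss}$. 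Because $\mathfrak{g}'$ is reductive with radical $R=Z(\mathfrak{g}')$, evaluation of $L(\mathfrak{g}',\sigma)$ at $b_j$ factors through the semisimple part together with a character $\chi_j$ of $R$, so the ambient module $W=W_1\otimes W_2$ is simultaneously an evaluation module for $L(\mathfrak{g}',\sigma)$ on the very same set of points. The point of condition \eqref{a6.1} is precisely that distinct orbits yield distinct group homomorphisms $r\mapsto b_j^{\,r}$ when restricted to $\Gamma=\bigoplus_i m_i\mathbb Z$.

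The engine of the argument is Proposition \ref{prop 3.5}. The coroot $h_\alpha$ lies in $\mathfrak{h}(\underline 0)$, which centralizes $\mathfrak{g}'$, hence $h_\alpha\in R$, and so $h_\alpha\otimes t^r$ acts as a scalar $c_r$ on the irreducible module $W$. After the normalization $\lambda_{r,s}=1$ fixed earlier, the multiplicativity in Proposition \ref{prop 3.5}(2) shows that $r\mapsto c_r$ is a group homomorphism on $\Gamma\setminus\{0\}$, while Proposition \ref{prop 3.5}(1) gives $c_r\neq 0$; indeed, passing to $\widetilde V=M/W$, where $h_\alpha\otimes t^r$ acts as the identity, forces $c$ to be the trivial character. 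Expanding the action through the $p$ points yields the identity
\[
\sum_{j=1}^{p}\chi_{b_j}(r)\,\chi_j(h_\alpha)=1\qquad\text{for all }0\neq r\in\Gamma ,
\]
an equality between a combination of the distinct characters $\chi_{b_1},\dots,\chi_{b_p}$ and the trivial character on the right.

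Next I invoke Dedekind's linear independence of characters: the distinct homomorphisms $\chi_{b_j}\colon\Gamma\to\mathbb C^{\ast}$ are linearly independent, so the displayed equality can hold only if exactly one coefficient $\chi_{j}(h_\alpha)$ is nonzero, say at $j=j_0$ with $\chi_{b_{j_0}}$ trivial, and $\chi_j(h_\alpha)=0$ for $j\neq j_0$. To upgrade this to $p=1$ I transport the problem back to $M$ itself, using $M\cong L(\widetilde V)(\bar 0)$ from Proposition \ref{prop 6.1}(1) together with Proposition \ref{prop 3.5}(1),(3): injectivity and equality of graded dimensions make $h_\alpha\otimes t^r\colon M'(s)\xrightarrow{\ \sim\ }M'(s+r)$ an isomorphism, so $M$ is free of rank $\dim M'(0)$ over $\mathbb C[\Gamma]$ with $h_\alpha\otimes t^r$ acting as the shift. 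I then compare this shift structure with the action of the Hamiltonian derivations $I(\bar r,r)=D(\bar r,r)-D(\bar r,0)$ given on $L(\widetilde V)$, and show that a genuinely second evaluation point carrying a nontrivial factor $W^{(j)}$ would split off a proper graded $\tau^0$-submodule, contradicting the irreducibility of $M$ from Proposition \ref{prop 3.3}(1).

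The hard part will be exactly this last collapsing step. The relation coming from $h_\alpha$ lives in the radical $R$, so a priori it only pins down those evaluation points at which $R$ acts nontrivially and says nothing about "spurious" points where $\chi_j$ degenerates while the semisimple factor $W^{(j)}$ remains nontrivial; the character computation alone therefore cannot separate the $\mathfrak{g}'_{ss}$-support. My expectation is to close this gap by coupling the injective shift action of $h_\alpha\otimes t^r$ with the derivation action on $L(\widetilde V)$ and the uniform multiplicities of Proposition \ref{prop 3.5}(3), thereby showing every point must genuinely contribute to the single trivial character, which is the template of the analogous arguments in \cite{[1]} and \cite{[4]}. Carrying out this compatibility computation cleanly, rather than the character bookkeeping, is where the real work lies.
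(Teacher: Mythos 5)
There is a genuine gap here, and you have in fact located it yourself. The whole proposition reduces to showing that \emph{every} evaluation point occurring in the factorization of $W_1$ has trivial character on $\Gamma$ (i.e.\ $a_{I_i}^r=1$ for all $r\in\Gamma$), after which the separation condition (\ref{a6.1}) rules out a second point exactly as you say. But your engine cannot deliver the first half. The element $h_\alpha$ lies in $\mathfrak{h}(\underline{0})\subseteq R$, so $h_\alpha\otimes t^r$ acts through the one-dimensional character by which $L(R,\sigma)$ acts on $W=W_1\otimes W_2$; Proposition \ref{prop 3.5}(2) then only says that $r\mapsto c_r$ is a single nonvanishing character of $\Gamma$ (trivial after passing to $\widetilde{V}$). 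Rewriting $c_r$ as $\sum_j\chi_{b_j}(r)\chi_j(h_\alpha)$ already presupposes that the radical evaluates at the same points as the semisimple part, which Theorem \ref{Th 6.1} (a statement about $L(\mathfrak{g}'_{ss},\sigma)$, and $h_\alpha\notin\mathfrak{g}'_{ss}$) does not give; and even granting it, linear independence of characters only forces $\chi_j(h_\alpha)=0$ at the points with nontrivial $\chi_{b_j}$ --- it says nothing about the semisimple factors $W^{(j)}$ sitting at those points, which is precisely what must be killed. Your proposed repair (playing the $\mathbb{C}[\Gamma]$-shift structure of $M$ against the derivation action) is explicitly deferred as ``where the real work lies,'' so the proposition is not established.

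The paper's proof goes directly at the missing step and never touches $h_\alpha$ or Proposition \ref{prop 3.5}. It uses that $W_1\otimes W_2$ is a module for the semidirect product $L=H_{n-2}^{\prime}(m')\ltimes L(\mathfrak{g}^\prime,\sigma)$, not merely for the multiloop algebra: projecting onto the $i$-th simple factor in the target of the evaluation map and running the computation of \cite{[3]} (driven by the identity $[I(\bar r,r),X\otimes t^s]=(\bar r,s)\bigl(X\otimes t^{r+s}-X\otimes t^s\bigr)$, whose evaluation at the $i$-th point is $(\bar r,s)(a_{I_i}^{r}-1)$ times the image of $X\otimes t^s$) yields $\pi_i(H_{n-2}^{\prime}(m'))=0$ and $a_{I_i}^r=1$ for all $r\in\Gamma$ at \emph{every} point $i$. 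Only then does (\ref{a6.1}) collapse the support to a single point. If you want to salvage your write-up, replace the $h_\alpha$ character computation --- which constrains only the radical --- by this derivation-compatibility computation; your final step is correct but is the easy part.
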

\begin{proof}
We know by Theorem \ref{Th 6.1}, that the representation factors through $l^{n-2}$
copies, for some positive integer $l$. we will prove here that $l=1$. Choose the $i$-th piece of $\mathfrak{g}^\prime_{ss}$ and choose the
projection of the map $\pi$, say $\pi_i$ onto it. Doing the same calculation as in
\cite{[3]} we will get $\pi_i(H_{n-2}^{\prime}(m'))=0$ and $a_{I_i}^r=1$ for all $r\in
\Gamma$. Now suppose there are at least two pieces, say $i$-th and $j$-th piece is
there. Therefore $I_i$ and $I_j$ are two different element of $I$ with
$a_{I_i}^r=1=a_{I_j}^r$ for all $r \in \Gamma$. Let $I_i=(i_1,i_2,\dots, i_{n-2})$ and
$I_j=(j_1,j_2,\dots j_{n-2})$. Therefore there is $k$ with $1\leq k\leq n-2$ such that
$i_k\neq j_k$. Now if we take $r=(0,\dots, m_k,\dots, 0)$ then
$a_{I_i}^r=1=a_{I_j}^r$ will give $a_{k,i_k}^{m_k}=a_{k,j_k}^{m_k}$, a contradiction
to equation (\ref{a6.1}). So there is at most one piece.
\end{proof}
Now we know $\pi_i(H_{n-2}^\prime(m'))=0$, therefore $\mathfrak{g}^2\subseteq \pi(H_{n-2}^\prime(m'))$.
Now our aim is to understand  finite dimensional irreducible modules for
$H_{n-2}^\prime(m)$. We are going to establish a relation between finite-dimensional $H_{n}^\prime(m)$ modules
and $H_{n}(m)\ltimes A_{n}(m)$-modules with finite-dimensional weight spaces. In order to do that we follow the method used in \cite{[3]}.

Let $W$ be an irreducible finite-dimensional $H_n^\prime(m)$ module.  We define  $A_n(m)\rtimes H_n(m)$ module action on $L(W)=W\otimes A_n(m)$ in the following way
\newline

$D(\bar{r},r).(w\otimes t^k)=(I(\bar{r},r).w)\otimes t^{r+k}+(\bar{r}, \beta+k)w\otimes t^{r+k}$, \\
\hspace*{4mm}$D(u,0)(w\otimes t^k)=(u,\alpha+k)w\otimes t^k$,\\
\hspace*{4mm}
$ t^r.w \otimes t^k=w \otimes t^{r+k}$, for all $u\in \mathbb{C}^n,\;  r(\neq0),k\in \Gamma, w\in W$ and some $\alpha,\beta\in \mathbb{C}^n$.

 We denote this module as $(\pi_{\alpha,\beta},L(W))$. \\
It is clear that $L(W)=\displaystyle{\bigoplus_{k\in \Gamma }}W\otimes t^k$ is the weight space decomposition of $L(W)$. Consider the $H_n^\prime(m)$ submodule of $L(W)$, $W_0=\{w\otimes t^r-w\;|\; r\in \Gamma,\;w\in W\}$, then $\overline{L(W)}=L(W)/W_0$ is an $H_n^\prime(m)$ submodule. Let $(\theta,\overline{L(W)})$ be its corresponding representation. Now we define a new representation $(\theta_\xi,\overline{L(W)})$ of $H_n^\prime(m)$ by the following action
\begin{center}
$\theta_{\xi}(I(\bar{r},r)).v=\theta (I(\bar{r},r))v+(\bar{r},\xi)v$, where $\xi \in \mathbb{C}^n$. 
\end{center}
 It is easy to see $\theta_0=\theta$. 
 \begin{lemma}\label{l6.2}
 Let $W$ be a finite-dimensional irreducible $H_n^\prime(m)$ module. Then $L(W)$ is irreducible $A_n(m) \rtimes H_n(m)$ module.
 \end{lemma}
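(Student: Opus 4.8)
The plan is to exploit the $\Gamma$-grading $L(W)=\bigoplus_{k\in\Gamma}W\otimes t^k$, which is precisely the simultaneous eigenspace decomposition for the commuting operators $D(u,0)$, $u\in\mathbb{C}^n$, since $D(u,0)$ acts on $W\otimes t^k$ by the scalar $(u,\alpha+k)$. Let $N$ be a nonzero $A_n(m)\rtimes H_n(m)$-submodule of $L(W)$. First I would show $N$ is graded: given $0\neq v\in N$ with finite support $v=\sum_{k\in F}w_k\otimes t^k$, the vectors $\alpha+k$ ($k\in F$) are pairwise distinct, so one can choose $u_0\in\mathbb{C}^n$ with the scalars $(u_0,\alpha+k)$ pairwise distinct; then $D(u_0,0)$ restricts to an operator on $\mathrm{span}\{w_k\otimes t^k : k\in F\}$ with distinct eigenvalues, and applying a suitable polynomial in $D(u_0,0)$ projects $v$ onto each homogeneous component, so every $w_k\otimes t^k\in N$. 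Hence $N=\bigoplus_{k}\bigl(N\cap(W\otimes t^k)\bigr)$.

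Next I would use the $A_n(m)$-action to show that $N$ is determined by a single subspace of $W$. Writing $N\cap(W\otimes t^k)=U_k\otimes t^k$ for a subspace $U_k\subseteq W$, the relation $t^r.(w\otimes t^k)=w\otimes t^{r+k}$ together with the invertibility of $t^r$ (with inverse $t^{-r}$) shows $U_{r+k}=U_k$ for all $r$; thus all the $U_k$ coincide with one subspace $U\subseteq W$, giving $N=U\otimes A_n(m)$, and $U\neq 0$ since $N\neq 0$.

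Finally I would force $U=W$ using the Hamiltonian part. For $w\in U$ and $r\in\Gamma$ the element $D(\bar r,r).(w\otimes t^k)=\bigl(I(\bar r,r).w\bigr)\otimes t^{r+k}+(\bar r,\beta+k)\,w\otimes t^{r+k}$ lies in $N$, and its component in $W\otimes t^{r+k}$ is $\bigl(I(\bar r,r).w+(\bar r,\beta+k)w\bigr)\otimes t^{r+k}$; since this lies in $U\otimes t^{r+k}$ and $(\bar r,\beta+k)w\in U$, subtracting gives $I(\bar r,r).w\in U$. Thus $U$ is stable under every $I(\bar r,r)$, i.e. $U$ is a nonzero $H_n'(m)$-submodule of $W$, so the irreducibility of $W$ yields $U=W$ and hence $N=L(W)$.

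The argument is essentially formal once the three actions are applied in this order; the only points demanding care are the grade-separation step (checking that the functionals $u\mapsto(u,\alpha+k)$ are pairwise distinct, so that the homogeneous projections indeed lie in $N$) and the cancellation of the scalar term $(\bar r,\beta+k)w$, which is exactly what lets the purely $H_n'(m)$-theoretic conclusion $U=W$ be drawn. I do not expect a genuine obstacle beyond keeping track of these bookkeeping details.
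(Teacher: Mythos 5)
Your proposal is correct and follows essentially the same route as the paper's proof: extract homogeneous components (the paper invokes this as "$L(W)$ is a weight module", which your $D(u_0,0)$-eigenvalue argument makes explicit), use the invertible $A_n(m)$-action to write the submodule as $U\otimes A_n(m)$, and then apply $D(\bar r,r)$ to show $U$ is $H_n'(m)$-stable, forcing $U=W$ by irreducibility. The only cosmetic difference is that the paper applies $D(\bar r,r)$ to $u\otimes t^{-r}$ so the output lands in the degree-zero component, whereas you cancel the scalar term in an arbitrary degree.
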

 \begin{proof}
 Let $U_0$ be a non-zero submodule of $L(W)$. Since $L(W)$ is a weight module , so $u_0 \otimes t^r \in U_0$ for some non-zero $u_0 \in U_0$ and $r \in \Gamma$. Now the action of $A_n(m)$ on $L(W)$ implies that $u_0\otimes A_n(m) \in U_0.$ Hence we have $U_0= U_1\otimes A_n(m)$ for some non-zero subspace $U_1$ of $W$. Therefore to complete the proof, it is sufficient to show that $U_1$ is a submodule of $W$.\\
 Let $r \in \Gamma$, $u \in U_1$ and consider the action
 $$ D(\bar r,r)(u \otimes t^{-r})= I(\bar r,r).u + (\bar r,\beta -r)u.$$
 This implies that $I(\bar r,r).u \in U_1$. Hence $U_1$ is a submodule of $W$.
 \end{proof}
\begin{lemma}\label{l6.3}
Let $(\pi_{\alpha,\beta},L(W))$ be a finite dimensional irreducible $A_n(m)\rtimes H_n(m)$ module for a finite dimensional module $(\eta,W)$ of $H_n ^\prime(m)$. Then $(\theta_\xi, \overline{L(W)}$ is irreducible for $H_n^\prime(m)$.
\end{lemma}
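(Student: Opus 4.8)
Let me carefully parse what Lemma 6.3 is asking. We have:

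- $W$: a finite-dimensional $H_n'(m)$-module (where $H_n'(m) = \text{span}\{I(\bar r, r) : r \in \Gamma\}$, with $I(\bar r, r) = D(\bar r, r) - D(\bar r, 0)$).
- $L(W) = W \otimes A_n(m)$: a module for $A_n(m) \rtimes H_n(m)$.
- $W_0 = \{w \otimes t^r - w : r \in \Gamma, w \in W\}$: an $H_n'(m)$-submodule.
- $\overline{L(W)} = L(W)/W_0$: an $H_n'(m)$-module.
- $\theta$: the representation on $\overline{L(W)}$.
- $\theta_\xi$: a twist, $\theta_\xi(I(\bar r, r)).v = \theta(I(\bar r, r))v + (\bar r, \xi)v$.

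The claim (Lemma 6.3): If $(\pi_{\alpha,\beta}, L(W))$ is an irreducible $A_n(m) \rtimes H_n(m)$-module (equivalently, by Lemma 6.2, $W$ is irreducible as $H_n'(m)$-module), then $(\theta_\xi, \overline{L(W)})$ is irreducible for $H_n'(m)$.

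Wait — I need to think more carefully about whether this is a converse to Lemma 6.2 or an independent construction.**Parsing the statement.** The claim is a converse-type companion to Lemma~\ref{l6.2}. We start with a finite-dimensional $H_n'(m)$-module $(\eta,W)$, form $L(W)=W\otimes A_n(m)$ as an $A_n(m)\rtimes H_n(m)$-module, pass to the $H_n'(m)$-submodule quotient $\overline{L(W)}=L(W)/W_0$, and twist it to $(\theta_\xi,\overline{L(W)})$. Under the hypothesis that $(\pi_{\alpha,\beta},L(W))$ is irreducible as an $A_n(m)\rtimes H_n(m)$-module, we must show $(\theta_\xi,\overline{L(W)})$ is irreducible as an $H_n'(m)$-module. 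The natural strategy is contrapositive: given a nonzero proper $H_n'(m)$-submodule of $\overline{L(W)}$, I would lift it to produce a nonzero proper $A_n(m)\rtimes H_n(m)$-submodule of $L(W)$, contradicting the irreducibility hypothesis.

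\medskip\noindent\textbf{Plan of proof.} First I would unwind the quotient map $q:L(W)\to\overline{L(W)}$ explicitly. Since $W_0=\operatorname{span}\{w\otimes t^r-w:r\in\Gamma,\,w\in W\}$ identifies all the graded pieces $W\otimes t^r$ with $W$, the quotient $\overline{L(W)}$ is canonically isomorphic to $W$ as a vector space, via $w\otimes t^r\mapsto \bar w$. Under this identification I would compute the induced $H_n'(m)$-action: from the module formula $D(\bar r,r).(w\otimes t^k)=(I(\bar r,r).w)\otimes t^{r+k}+(\bar r,\beta+k)w\otimes t^{r+k}$ and $I(\bar r,r)=D(\bar r,r)-D(\bar r,0)$, I would show that $\theta(I(\bar r,r))$ acts on $\overline{L(W)}\cong W$ as $\eta(I(\bar r,r))$ plus a scalar depending on $\beta$, so that the twist $\theta_\xi$ simply re-normalizes that scalar by $(\bar r,\xi)$. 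The upshot is that $(\theta_\xi,\overline{L(W)})$ is, up to the twisting scalars $(\bar r,\xi)$, nothing but the original module $(\eta,W)$. Thus irreducibility of $(\theta_\xi,\overline{L(W)})$ should reduce to irreducibility of $W$ as an $H_n'(m)$-module.

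\medskip\noindent\textbf{Key reduction step.} The crux is therefore to argue that the hypothesis ``$(\pi_{\alpha,\beta},L(W))$ irreducible'' forces $W$ to be irreducible over $H_n'(m)$. This is precisely the converse direction of the equivalence underlying Lemma~\ref{l6.2}: there, irreducibility of $W$ gave irreducibility of $L(W)$; here I need the reverse implication. Concretely, suppose $U_1\subsetneq W$ is a nonzero proper $H_n'(m)$-submodule. Then I would verify that $U_1\otimes A_n(m)$ is stable under the full $A_n(m)\rtimes H_n(m)$-action: stability under $A_n(m)$ (multiplication of $t^r$) is immediate from $t^r.(w\otimes t^k)=w\otimes t^{r+k}$, and stability under $D(\bar r,r)$ and $D(u,0)$ follows because $I(\bar r,r).U_1\subseteq U_1$ and the remaining terms are scalars. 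Hence $U_1\otimes A_n(m)$ is a nonzero proper submodule of $L(W)$, contradicting irreducibility. This shows $W$ is irreducible, and combined with the structural identification above yields that $(\theta_\xi,\overline{L(W)})$ is irreducible.

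\medskip\noindent\textbf{Expected main obstacle.} The routine computations (unwinding $W_0$, identifying the twisted action) are mechanical. The delicate point is bookkeeping the scalar contributions $(\bar r,\beta+k)$ and $(\bar r,\xi)$ to confirm that the quotient-and-twist operation returns the \emph{same} abstract $H_n'(m)$-module $W$ rather than a genuinely different twist that might fail to inherit irreducibility; one must check that any additive character $r\mapsto(\bar r,\cdot)$ appearing is an $H_n'(m)$-module endomorphism (a scalar on each $I(\bar r,r)$) and hence does not interact with the submodule lattice. I would handle this by noting that such twists by a linear functional of $r$ preserve the collection of submodules, so irreducibility is twist-invariant. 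Once that invariance is recorded, the contrapositive argument closes the proof.
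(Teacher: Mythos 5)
Your proposal is correct and follows essentially the same route as the paper: the paper's proof likewise notes that $(\theta_{\alpha-\beta},\overline{L(W)})\cong(\eta,W)$ (your identification of the quotient-and-twist with the original module) and that a nonzero proper submodule $W_0\subsetneq W$ would yield the nonzero proper submodule $W_0\otimes A_n(m)$ of $L(W)$ (your contrapositive reduction). Your write-up merely makes explicit the twist-invariance of the submodule lattice, which the paper leaves implicit.
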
 
\begin{proof}
Note that $(\theta_{\alpha-\beta},\overline{L(W)}) \cong(\eta,W)$. Moreover one can see that if $W_0$ is a nonzero proper submodule of $W$, then $W_0\otimes A_n(m)$ is a nonzero proper submodule of $L(W)$. 
\end{proof} 
 \begin{lemma}\label{l4.4}

Let $W$ be finite dimensional irreducible module for $\mathfrak{sp}_n.$  Then $W$ can be made into
$H_n^\prime(m)$-module by the action: $I(\bar r,r).w=(r^t {\bar r})w+(\bar r,\zeta)w$,
where $\zeta \in \mathbb{C}^n$ and $r^t $ denote the transpose of the row vector $r\in \Gamma.$

\end{lemma}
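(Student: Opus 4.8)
The plan is to exhibit the stated formula as a pullback of the $\mathfrak{sp}_n$-module structure of $W$ along a Lie algebra homomorphism, twisted by a character. Set $\rho(r):=r^t\bar r\in\mathfrak{gl}_n$ and $c(r):=(\bar r,\zeta)\in\mathbb{C}$. I would show that $\rho\colon I(\bar r,r)\mapsto\rho(r)$ is a homomorphism of Lie algebras from $H_n^\prime(m)$ into $\mathfrak{sp}_n$, and that $c\colon I(\bar r,r)\mapsto c(r)$ is a one-dimensional representation. Granting this, the assignment $I(\bar r,r).w=\rho(r)w+c(r)w$ is exactly the $H_n^\prime(m)$-action obtained by composing $\rho$ with the given action of $\mathfrak{sp}_n$ on $W$ and tensoring with the character $c$, so $W$ becomes an $H_n^\prime(m)$-module.

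The first task is to record the bracket in $H_n^\prime(m)$. Writing $I(\bar r,r)=h_r-D(\bar r,0)$ and using the relations from Section \ref{sec 2}, namely $[h_r,h_s]=(\bar r,s)h_{r+s}$, $[D(\bar r,0),h_s]=(\bar r,s)h_s$, and $[D(\bar r,0),D(\bar s,0)]=0$, together with the antisymmetry $(\bar s,r)=-(\bar r,s)$, which is immediate from $\bar r=(r_{k+1},\dots,r_{2k},-r_1,\dots,-r_k)$ since $(\bar r,s)$ is then the standard symplectic pairing of $r$ and $s$, a short computation yields
\[
[I(\bar r,r),I(\bar s,s)]=(\bar r,s)\bigl(I(\overline{r+s},r+s)-I(\bar r,r)-I(\bar s,s)\bigr),
\]
the $D(\bar r,0)$-contributions cancelling precisely because of this antisymmetry. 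Applying $c$ to the right-hand side gives $(\bar r,s)\bigl((\overline{r+s},\zeta)-(\bar r,\zeta)-(\bar s,\zeta)\bigr)=0$, so $c$ kills brackets and is indeed a character.

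It remains to check that $\rho$ lands in $\mathfrak{sp}_n$ and respects brackets. For the first point I would write $v=r^t$ for the column vector and use $\bar r^{\,t}=Jv$, i.e. $\bar r=v^tJ^t$ as a row vector, with $J=\left(\begin{smallmatrix}0&I_k\\ -I_k&0\end{smallmatrix}\right)$; then $\rho(r)=r^t\bar r=v(v^tJ^t)=(vv^t)J^t$ is a symmetric matrix times $J^t=-J$, so $\rho(r)^tJ+J\rho(r)=0$ and $\rho(r)\in\mathfrak{sp}_n$. For the second point the rank-one shape of $\rho$ is decisive: since $\bar r\,s^t=(\bar r,s)$ is a scalar, $\rho(r)\rho(s)=r^t(\bar r\,s^t)\bar s=(\bar r,s)\,r^t\bar s$, whence $[\rho(r),\rho(s)]=(\bar r,s)(r^t\bar s+s^t\bar r)$. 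On the other hand $\rho(r+s)-\rho(r)-\rho(s)=r^t\bar s+s^t\bar r$, so
\[
[\rho(r),\rho(s)]=(\bar r,s)\bigl(\rho(r+s)-\rho(r)-\rho(s)\bigr),
\]
which is precisely $\rho$ applied to the bracket computed above. Hence $\rho$ is a Lie algebra homomorphism into $\mathfrak{sp}_n$, completing the proof.

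The only genuinely delicate point, and the one I would guard against, is the matching of sign conventions: the same antisymmetric quantity $(\bar r,s)$ must appear with identical sign in the $H_n^\prime(m)$-bracket and in the matrix commutator $[\rho(r),\rho(s)]$. Both occurrences trace back to the single structural identity $\bar r^{\,t}=Jr^t$, so once the symplectic form $J$ is fixed the two displayed formulas are forced to agree and no further case analysis is needed.
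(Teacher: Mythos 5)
Your proof is correct and follows essentially the same route as the paper, which simply notes that $r^t\bar r\in\mathfrak{sp}_n$ and asserts that the bracket compatibility is easy to check. You have merely supplied the details the paper omits: the explicit bracket $[I(\bar r,r),I(\bar s,s)]=(\bar r,s)\bigl(I(\overline{r+s},r+s)-I(\bar r,r)-I(\bar s,s)\bigr)$, the matching rank-one identity $[\rho(r),\rho(s)]=(\bar r,s)\bigl(\rho(r+s)-\rho(r)-\rho(s)\bigr)$, and the vanishing of the character on brackets, all of which are accurate.
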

 \begin{proof}
 Note that for $r \in \Gamma ,$ $r^t\bar r \in \mathfrak{sp}_n$ and hence the action is well defined. It is easy to see that this action defines a module structure on $W$.

  \end{proof}
\begin{theorem}
Suppose $W$ is a finite-dimensional irreducible $\mathfrak{sp}_n$-module.  Let $\alpha,\beta \in \mathbb{C}^n$. Take
$L(W)=W\otimes A_n(m)$ and consider the action $D(\bar r,r)(w\otimes t^k)=(\bar r,k+\beta)w\otimes
t^{k+r}+(r^t{\bar r}).w\otimes t^{k+r}$, for $r(\neq 0) \in \Gamma$ and $D(u,0)(w\otimes
t^k)=(u,\alpha+k)w\otimes t^k$ and $t^r(w\otimes t^k)=w\otimes t^{k+r}$. Then $L(W)$ is an irreducible module for $H_n(m) \ltimes   A_n(m)$. Moreover, all
irreducible representations of $H_n(m)\ltimes A_n(m)$ with finite-dimensional weight spaces
occur in this way.
\end{theorem}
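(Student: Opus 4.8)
The plan is to prove the two halves separately: first that each $L(W)$ coming from a finite dimensional irreducible $\mathfrak{sp}_n$-module is irreducible over $H_n(m)\ltimes A_n(m)$, and then that every irreducible weight module of $H_n(m)\ltimes A_n(m)$ with finite dimensional weight spaces is of this form.

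The cornerstone of both directions is the observation that $\psi\colon H_n^\prime(m)\to \mathfrak{sp}_n$, $I(\bar r,r)\mapsto r^t\bar r$, is a surjective Lie algebra homomorphism. I would verify surjectivity from the fact that the rank one symmetric matrices $r^t r$ ($r\in\Gamma$) span $S^2(\mathbb{C}^n)\cong\mathfrak{sp}_n$ --- polarizing, $(r+s)^t(r+s)-r^t r-s^t s=r^t s+s^t r$, and $\Gamma$ spans $\mathbb{C}^n$ --- and I would verify the homomorphism property from the bracket identity $[r^t\bar r,\,s^t\bar s]=(\bar r,s)\bigl((r+s)^t\overline{(r+s)}-r^t\bar r-s^t\bar s\bigr)$, which matches the bracket $[I(\bar r,r),I(\bar s,s)]=(\bar r,s)\bigl(I(\overline{r+s},r+s)-I(\bar r,r)-I(\bar s,s)\bigr)$ of $H_n^\prime(m)$. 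Granting this, for the forward direction Lemma \ref{l4.4} makes the $\mathfrak{sp}_n$-module $W$ into an $H_n^\prime(m)$-module whose image already contains all of $\mathfrak{sp}_n$ (modulo the scalar twist $(\bar r,\zeta)$); hence $W$ is irreducible over $H_n^\prime(m)$, and Lemma \ref{l6.2} gives that $L(W)$ is irreducible over $H_n(m)\ltimes A_n(m)$. A short check shows the action displayed in the theorem is exactly $\pi_{\alpha,\beta}$ once $\zeta$ is absorbed into $\beta$.

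For the converse, let $V$ be an irreducible $H_n(m)\ltimes A_n(m)$-module with finite dimensional weight spaces for the operators $D(u,0)$. Since each $t^r\in A_n(m)$ is a unit shifting weights by $r$, the $t^r$ identify all weight spaces, so $V\cong W\otimes A_n(m)$ as an $A_n(m)$-module for a single finite dimensional weight space $W$. Transporting the $H_n(m)$-action through this isomorphism exhibits $V$ in the form $(\pi_{\alpha,\beta},L(W))$, the action of $D(\bar r,r)$ recording an $H_n^\prime(m)$-module structure on $W$; Lemma \ref{l6.3} then shows $\overline{L(W)}$ is a finite dimensional irreducible $H_n^\prime(m)$-module.

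The remaining step, which I expect to be the main obstacle, is to show that every finite dimensional irreducible $H_n^\prime(m)$-module factors through $\psi$ up to the one dimensional twist, i.e. $\rho(I(\bar r,r))=r^t\bar r+(\bar r,\zeta)$ for some $\zeta\in\mathbb{C}^n$ --- precisely the form of Lemma \ref{l4.4}. The difficulty is that $H_n^\prime(m)$ is infinite dimensional, so the finite dimensional Levi/Lie's theorem argument does not apply directly; I would instead follow the method of \cite{[3]}, analysing $\rho$ on the ideal $\ker\psi$ (the span of the linear relations among the matrices $r^t\bar r$) to show that $\rho(\ker\psi)$ acts by a character forced to be of the shape $I(\bar r,r)\mapsto(\bar r,\zeta)$, so that $\rho$ descends to an $\mathfrak{sp}_n$-action twisted by this character. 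Combining this with the realization of the previous paragraph identifies $V$ with $L(W)$ for the recovered $\mathfrak{sp}_n$-module $W$, with $\alpha,\beta$ read off from the weights and the twist, completing the classification.
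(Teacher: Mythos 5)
Your first half follows the paper's route: the observation that $\psi\colon I(\bar r,r)\mapsto r^t\bar r$ is a surjective Lie algebra homomorphism onto $\mathfrak{sp}_n$ is exactly what underlies Lemma \ref{l4.4}, and combining it with Lemma \ref{l6.2} does give irreducibility of $L(W)$; your verification of the bracket identity and of surjectivity by polarization is correct. Likewise, your reduction of the converse to a statement about finite dimensional irreducible $H_n^\prime(m)$-modules (via the associative, weight-shifting action of the units $t^r$) is exactly what Lemmas \ref{l6.2} and \ref{l6.3} accomplish.

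The gap is in the step you yourself flag as the main obstacle: that every finite dimensional irreducible $H_n^\prime(m)$-module has the form $\rho(I(\bar r,r))=r^t\bar r+(\bar r,\zeta)$. The paper does not prove this; it is precisely Talboom's classification theorem \cite{[11]}, which is the fourth ingredient cited in the paper's one-line proof. Your proposed substitute --- that $\rho(\ker\psi)$ must act by a character because $\ker\psi$ is an ideal --- does not follow from general structure theory: $\rho(H_n^\prime(m))$ is a reductive subalgebra of $\mathfrak{gl}(W)$ acting irreducibly, and an ideal of such an algebra is a sum of simple factors plus a piece of the (at most one dimensional) centre; nothing a priori prevents $\rho(\ker\psi)$ from containing nontrivial simple factors while $\mathfrak{sp}_n$ still survives in the quotient. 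Excluding this possibility, and then forcing the residual character to have the specific shape $I(\bar r,r)\mapsto(\bar r,\zeta)$, is the substantial analysis carried out in \cite{[11]} (and, for divergence-zero fields, in \cite{[22]}). The ``method of \cite{[3]}'' you invoke is used in the present paper only to set up the correspondence between $H_n^\prime(m)$-modules and weight modules of $H_n(m)\ltimes A_n(m)$; \cite{[3]} itself imports the analogous classification from \cite{[22]} rather than proving it. As written, the decisive half of the theorem is therefore asserted rather than proved; you should either cite \cite{[11]} at this point, as the paper does, or supply that analysis.
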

\begin{proof}
 The proof follows from Lemmas \ref{l6.2}, \ref{l6.3},\ref{l4.4} and Theorem 5.2 of \cite{[11]}.
\end{proof}
We know $\widetilde{V}$ is completely reducible $L$ module. Therefore
$\widetilde{V}=\oplus_{i=1}^K \widetilde{V}_i$ for some $K\in \mathbb{N}$. Then by
the previous discussion each $\widetilde{V}_i\cong W_1^{i}\otimes W_2^i$ as
$\mathfrak{g}^\prime_{ss}\oplus \mathfrak{ sp}_{n-2}$ module, where $W_1^i$, $W_2^i$ are irreducible modules
for $\mathfrak{g}^\prime_{ss}$ and $\mathfrak{ sp}_{n-2}$ respectively. Since each component $\widetilde{V}_i$
is isomorphic as $H_{n-2}^\prime  (m')\ltimes (\mathfrak{h}(\underline{0})\otimes A(m))$ module, we can
take $W_2^i\cong W_2^1$ ($ =W_2$ ,say) as $\mathfrak{ sp}_n$-modules for each $i \in \{1,\dots
,K\}$. Now consider $W_1=\sum_{i=1}^KW_1^i$, which is a
$L(\mathfrak{g}^\prime_{ss},\sigma)$-module,  in particular $\mathfrak{g}^\prime_{ss}$ module. Since each
$W_1^i$ is irreducible, without loss of generality we can assume that the above sum
is direct. It is easy to see that $L$ is $\Lambda$-graded with zeroth component
$H_{n-2}^\prime  (m')\ltimes (\mathfrak{h}(\underline{0})\otimes A(m))$ and since $\widetilde{V}$ is
$\Lambda$ graded irreducible module (Proposition \ref {prop 6.1}), we can take $W_1$ as
$\Lambda$-graded irreducible $L(\mathfrak{g}^\prime_{ss},\sigma)$ module and $W_2$ a zero
graded as $H_{n-2}^\prime  (m')$-module which lies inside the zeroth graded component of $L$.\\
We now define a $\tau^0$-module structure on $W_1\otimes W_2\otimes A_n$ by 
\begin{center}
$X\otimes t^k(w_1\otimes w_2\otimes t^l)=Xw_1\otimes w_2\otimes t^{k+l}$, for $k,l
\in \mathbb{Z}^{n-2}$ and $X\in \mathfrak{g}^\prime _{ss}(\underline{k})$.
\end{center}
\begin{center}
$D(\bar r,r)(w_1\otimes w_2\otimes t^k)=(u,k+\beta)w_1\otimes w_2\otimes
t^{k+r}+w_1\otimes( r {\bar r}^t).w_2\otimes t^{k+r}$ for $r\neq 0$,

\end{center}
\begin{center}
$D(u,0)(w_1\otimes w_2 \otimes t^k)=(u,k+\alpha)w_1\otimes w_2\otimes t^k$.
\end{center}
Now take any one-dimensional representation of $L(R,\sigma)$ say $\psi$. Then for
$y\in R(\underline{k})$ we take $y\otimes t^k(w_1\otimes w_2\otimes
t^l)=\psi(y)(w_1\otimes w_2 \otimes t^{k+l})$.  Since $W_1$ is $\Lambda$ graded,
which is compatible with $\Lambda$-gradation of $\mathfrak{g}^\prime_{ss}$, so the submodule
$V^\prime=\bigoplus_{k \in \mathbb{Z}^n} W_{1,k}\otimes W_2\otimes t^k$ will be
irreducible module for $\tau^0$. One can easily check that
$L(\widetilde{V})(\bar{0})\cong V^\prime$ as $\tau^0$-module. Now from Proposition \ref{prop 6.1}, we see that $M\cong V^\prime$. \\

\begin{theorem}\label{Thm 6.6}
Let $V$ be an irreducible integrable $\tau$ module with finite-dimensional weight spaces, with $K_k$ acting as $c_0$ and $K_i$ acts trivially for $1\leq i\leq n$ and $i\neq k$. Then $V\cong U(\tilde{\tau})M/M^{\textit{Rad}}$, where $M^{Rad}$ is the unique maximal submodule of $U(\tilde \tau)M$. 
\end{theorem}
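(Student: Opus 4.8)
The plan is to assemble the structural results proved throughout Sections~\ref{sec 3} and~\ref{sec 4} into the desired reconstruction of $V$ from its highest weight space $M$. The starting point is the triangular decomposition $\tau=\tau^-\oplus\tau^0\oplus\tau^+$ together with the existence theorem guaranteeing a weight space $V_\mu$ annihilated by $\tau^+$, so that $M=\{v\in V:\tau^+.v=0\}$ is a nonzero irreducible $\tau^0$-module (Proposition~\ref{prop 3.3}). Since $V$ is irreducible, the canonical map $U(\tau)\otimes_{U(\tau^0\oplus\tau^+)}M\to V$ is surjective; the content of the theorem is to pin down its kernel. First I would form the generalized Verma-type module $U(\tilde\tau)M$ induced from the $\tau^0\oplus\tau^+$-action on $M$ (with $\tau^+$ acting by zero), and observe by the PBW theorem that it is a weight module of the correct shape, with $M$ sitting inside as the top layer. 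The surjection $U(\tilde\tau)M\twoheadrightarrow V$ then realizes $V$ as a quotient.

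Next I would identify the radical $M^{\textit{Rad}}$ as the unique maximal $\tau$-submodule of $U(\tilde\tau)M$ intersecting the copy of $M$ trivially (equivalently, the sum of all submodules meeting $M$ in $0$). The key point is that any proper submodule $N\subsetneq U(\tilde\tau)M$ must satisfy $N\cap M=0$: indeed $M$ generates the whole module, so if $N$ contained a nonzero element of $M$ it would, by irreducibility of $M$ over $\tau^0$, contain all of $M$ and hence equal $U(\tilde\tau)M$. Therefore the sum of all proper submodules is itself proper and has trivial intersection with $M$, which lets me define $M^{\textit{Rad}}$ as this maximal submodule and conclude that $U(\tilde\tau)M/M^{\textit{Rad}}$ is irreducible with highest weight space (isomorphic to) $M$.

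It then remains to show $V\cong U(\tilde\tau)M/M^{\textit{Rad}}$. Because the surjection $U(\tilde\tau)M\twoheadrightarrow V$ restricts to an isomorphism on $M$ (the weight space $V_\mu$ and its $\tau^0$-translates are exactly $M$), its kernel meets $M$ trivially and hence is contained in $M^{\textit{Rad}}$; conversely $V$ is irreducible, so the kernel is a maximal submodule and must equal $M^{\textit{Rad}}$. This matching of kernels gives the claimed isomorphism. The structural analysis of Section~\ref{sec 4}---the decomposition of $\widetilde V$ into tensor products $W_1^i\otimes W_2^i$ of $\mathfrak{g}^\prime_{ss}$- and $\mathfrak{sp}_{n-2}$-modules, and the explicit $\tau^0$-module $V^\prime\cong L(\widetilde V)(\bar 0)$---is what guarantees $M$ is genuinely realized in this tensor-product form, so that $U(\tilde\tau)M/M^{\textit{Rad}}$ is a concrete, fully described object rather than an abstract quotient.

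\textbf{The main obstacle} I expect is verifying that the induced module $U(\tilde\tau)M$ is genuinely a weight module with finite-dimensional weight spaces and that the top layer $M$ injects, so that $M^{\textit{Rad}}$ is well-defined and proper; this rests on the compatibility of the triangular decomposition with the $H$-grading and on the bracket relations $[\tau_{++},\tau_-\oplus\tau_+]\subset\tau_{++}$, $[\tau_{--},\tau_-\oplus\tau_+]\subset\tau_{--}$ noted after the decomposition. Once the induced module is under control, the identification of the kernel is formal, and the remaining subtlety is purely bookkeeping: confirming that the weights of $V$ below $\mu$ are all reached, i.e.\ that $V$ is generated by $M$ under $\tau^-$, which again follows from irreducibility of $V$.
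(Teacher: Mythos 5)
Your proposal is correct and follows essentially the same route as the paper: the paper's own proof of Theorem~\ref{Thm 6.6} is simply ``follows from the previous discussion,'' and what you have written out --- inducing from the $\tau^0\oplus\tau^+$-module $M$, showing every proper submodule of $U(\tilde\tau)M$ meets the top layer $M$ trivially so that $M^{\textit{Rad}}$ is well defined and maximal, and matching it with the kernel of the surjection onto $V$ --- is exactly the standard highest-weight argument that phrase is invoking, resting on the same ingredients (Proposition~\ref{prop 3.3} and the structure of $\widetilde V$ from Section~\ref{sec 4}).
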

\begin{proof}
Follows from the previous discussion.
\end{proof}

 
 	\vspace{2cm}
		{\bf Acknowledgments:} Authors would like to thank the anonymous referee for nice and helpful suggestions. For this project the second author is supported by funds of the National Natural Science Foundation of China (Grant No. 12071136) and Science and Technology Commission of Shanghai Municipality (Grant No. 22DZ2229014).


\begin{thebibliography}{99}
\bibitem[ABFP]{[2]} Bruce Allison, Stephen Berman, John Faulkner, Arturo Pianzola, {\it
Multiloop realization of extended affine Lie algebras and Lie tori}, C Trans. Am.
Math. Soc.361(9)
(2009) 4807-4842.
\bibitem[BR]{[4]} Punita Batra, Senapathi Eswara Rao, {\it On integrable modules for the
twisted full toroidal Lie algebra}, J. Lie Theory 28 (1)(2018) 79-105.
\bibitem[BT]{[22]} Yuly Billig, John Talboom, Classification of category J modules for divergence zero vector fields
on a torus, J. Algebra 500 (2018) 498–516.
\bibitem[C]{[14]} V. Chari, Integrable representations of affine Lie algebras, Invent. Math. 85(1986), no. 2, 317-335.
\bibitem[CG]{[21]} V. Chari, J. Greenstein, Graded Level Zero Integrable Representations of Affine Lie Algebras, Transactions of American Mathematical Society, Vol. 360, No. 6, 2923-2940.
\bibitem[CLT]{[15]} Chen, Fulin; Li, Zhiqiang; Tan, Shaobin; Classification of integrable representations for toroidal extended affine Lie algebras, J. Algebra 514(2021), 1-37.
\bibitem[CP]{[6]} Vyjayanthi Chari, Andrew Pressley, Weyl modules for classical and
quantum affine algebras, Represent. Theory 5(2001) 191-223.
\bibitem[EN1]{[16]} Erhard Neher, Extended affine Lie algebras, C. R. Math. Acad. Sci. Soc. R. Can. 26 (3) (2004) 90-96.
\bibitem[EN2]{20} E.Neher, Extended affine Lie algebras-an introduction to their structure theory, in: Gemometric representation theory and Extended Affine Lie Algebras, in: Fields Inst. Commun.,vol.59, Amer. Math.Soc., Providence, RI, 2011, 107-167.

\bibitem[ER1]{[1]} S. Eswara Rao, Hamiltonian Extended Affine Lie Algebra and Its Representation Theory, Journal of Algebra ({ 628}) (2023) 71-97.
\bibitem[ER2]{[5]} S. Eswara Rao, {\it Classification of irreducible integrable modules
for toroidal Lie algebras with finite-dimensional weight spaces}, J. of Algebra
277 (2004) 318-348.
\bibitem[GL]{[12]} Xiangqian Guo, Genquiang Liu, Jet modules for the centerless
Virasoro-like algebra, J. Algebra Appl. 18(1) (2019) 1950002.
\bibitem[HL]{[9]} Haisheng Li, On certain categories of modules for affine Lie algebras,
Math. Z.  248 (3) (2004) 635-664.
\bibitem[JH]{[8]} James E. Humphreys, Introduction to Lie Algebras and Representation
Theory, Graduate Texts in Mathematics, vol. 9,Springer- Verlag, New York-Berlin,
1978, second printing, revised.
\bibitem[JT]{[11]} John Talboom, category $J-$
modules for Hamiltonian vector fields on torus, J. Lie theory 28 (2018) 903-914. 




\bibitem[KN]{[13]} Katsuyuki Naoi, Multiloop Lie algebras and the construction of extended affine Lie algebras. J. Algebra, 323(8):2103-2129, 2010.




\bibitem[L]{[18]} M. Lau, Representations of multiloop algebras, Pacific J. Math. 245 (2010), no. 1, 167-184.

\bibitem[NSS]{[10]} Erhard Neher, Alistair Savage, Prasad Senesi, Irreducible
finite-dimensional representation of equivariant map algebras, Trans. Am. Math. soc.
364(5)(2012) 2619-2646.
\bibitem[PB]{[23]} Punita Batra, Representations of twisted multi-loop Lie algebras, Journal of Algebra, 272(2004) 404-416.
\bibitem[PR]{[7]} Souvik Pal, S. Eswara Rao, Classification of level zero irreducible
integrable modules for twisted full toroidal Lie algebra, Journal of Algebra, 578 (2021) 1-29.
\bibitem[RSB]{[3]} S. Eswara Rao, Sachin S. Sharma, Punita Batra, {\it Integrable modules
for twisted toroidal extended affine Lie algebras}, Journal of Algebra 556 (2020)
1057-1072.
\bibitem[SP1]{[19]} Souvik Pal, {\it Integrable modules for graded Lie tori with
finite-dimensional weight spaces}, J. Pure Appl. Algebra 225(2021), no.9, Paper No. 106679, 21 pp.
\bibitem[SP2]{sp} Souvik Pal, On level zero integrable modules over extensions of Lie tori, "PhD thesis", Harish-Chandra Research Institute, 2021.
\bibitem[Y]{[17]}Yoji Yoshii, Lie tori-a simple characterization of extended affine Lie algebras, Publ. Res. Inst. Math. Sci. 42 (3) (2006) 739-762.
\end{thebibliography}
\end{document}